\let\@fnsymbol\@arabic
\theoremstyle{plain}
\newtheorem{theorem}{Theorem}[section]
\newtheorem{proposition}{Proposition}[section]
\newtheorem{lemma}{Lemma}[section]
\theoremstyle{definition}
\theoremstyle{remark}
\newtheorem{remark}{Remark}[section]
\newtheorem*{remark*}{Remark}
\def \R {{\mathbb R}}
\def \ra {\rightarrow }
\def \o {\omega}
\def \s {\sigma}
\def \e {\epsilon}
\def \L {\Lambda}
\def \l {\lambda}
\def \b {\beta}
\def \g {\gamma}
\def \a {\alpha}
\def \ra {\rightarrow}
\def \uN {\Lambda_N}
\title{Rhythmic behavior of an Ising Model\\ with dissipation at low temperature}
\author{Rapha\"el Cerf 
\footnote{
	%D\'epartement de math\'ematiques et applications, Ecole Normale Sup\'erieure, DMA, Ecole Normale Sup\'erieure,
%CNRS, PSL Research University, 75005 Paris.
%\emph{E-mail address}:  raphael.cerf@ens.fr
DMA, Ecole Normale Sup\'erieure, CNRS, PSL University, 75005 Paris.
\newline\vskip-13pt
\indent\kern7pt Laboratoire de Math\'ematiques d'Orsay, CNRS, Universit\'e Paris– Saclay, 91405 Orsay.
} 
\and Paolo Dai Pra 
\footnote{Department of Mathematics ``Tullio Levi-Civita'', University of Padova, Via Trieste 63, 35121 Padova, Italy.}\, 
\footnote{Department of Computer Science, University of Verona, Strada Le Grazie 15, 37134 Verona, Italy.
%\emph{E-mail address}: paolo.daipra@unipd.it
}
\and Marco Formentin
\footnotemark[2]\,
\footnote{Padova Neuroscience Center, University of Padova, via Giuseppe Orus 2, 35131 Padova, Italy.
%\emph{E-mail address}: marco.formentin@unipd.it
}
\and Daniele Tovazzi \footnotemark[2]
%%\emph{E-mail address}: daniele.tovazzi@unipd.it
%}
}
\date{}
\begin{document}

\maketitle

\begin{abstract}
\noindent 
In this paper we consider the Glauber dynamics for the one-dimensional Ising model with dissipation, in a mesoscopic regime obtained by letting inverse temperature and volume go to infinity with a suitable scaling. In this limit the magnetization has a periodic behavior. Self-organized collective periodicity has been shown for many mean-field models but, to our knowledge, this is the first example with short-range interaction. This supports the view that self-organized periodicity is not linked with the mean-field assumption but it is a thermodynamic phenomenon compatible with short range interactions.
\vspace{0.3cm}

\noindent {\bf Keywords:} Self-organized complex  systems $\cdot$ Ising model with dissipation $\cdot$ Collective rhythmic behavior \\ \\
\end{abstract}
%{\bf 2010 MSC Classification:}

%====================================================================
\section{Introduction}
%====================================================================
Rhythmic behavior emerges in many biological and socioeconomic complex systems \cite{Iz07,turchin,weidlich}, and may involve a wide range of time scales: from the fraction of a second of neural rhythms, to the years of ecological and epidemiological rhythms. Such a behavior cannot be ascribed to the single units of the system (e.g. cells or individual animals) but it is the result of the interactions within the network. In recent years many stylized models have been proposed to identify possible origines of time-periodicity \cite{GiPo15,LiG-ONeS-G04,Sch85a,Sch85b,ShKu86,AlMi}. Existing examples are mostly restricted to mean-field interaction, i.e., the interaction network is the complete graph \cite{AnTo18,DaPFiRe13,CoDaPFo15,CoFoTo16,JK14,LuPo_b,LuPo_a}. It has been shown that periodicity may emerge in the thermodynamic limit in presence of some time-symmetry breaking features, such as {\em dissipation} \cite{CoDaPFo15,DaPFiRe13}, {\em delay} \cite{DiLo17,touboul19}, {\em asymmetry in the pair interaction} \cite{CoFoTo16,FeFoNe09}. %{\r In \cite{JK14} authors show the existence of a locally attractive periodic orbit of rotating measures for discrete mean-field rotators.}

In this paper we consider a dissipative version of the Glauber Dynamics on the Ising model with nearest neighbor interaction. The dissipative model is obtained from the standard Glauber Dynamics by introducing a linear mean-reversion that drives the logarithm of the rates to a reference value (e.g. zero) in the intervals between two consecutive spin-flips. The corresponding mean-field model has been fully solved in \cite{DaPFiRe13}. The picture that emerged is the following. The Glauber dynamics, as the number $N$ of spins diverges to infinity, converges to a deterministic limit (macroscopic) evolution. In absence of dissipation this evolution can be expressed in term of a single scalar parameter, the magnetization, which evolves according to a nonlinear ordinary differential equation. At the critical value of the inverse temperature $\beta_c = 1$ this evolution exhibits a {\em pitchfork bifurcation}: for $\beta \leq \beta_c$ the equilibrium $m=0$ is a global attractor, while as $\beta > \beta_c$ two nonzero stable equilibria bifurcate from the null solution. As dissipation is turned on, the  macroscopic evolution can be reduced to a two dimensional ordinary differential equation, still possessing a critical value $\beta_c$ for the inverse temperature, which now becomes a {\em Hopf bifurcation}: as $\beta > \beta_c$ a unique stable periodic orbit stems from the null solution.

Our aim is to show that the macroscopic evolution of the magnetization may be time-periodic also in the dissipative nearest neighbor Ising model. The mean-field case suggests that periodic orbits emerge in the dissipative model for temperatures that would lead to spontateous magnetization in the non dissipative system. It would therefore be natural to consider the nearest neighbor Ising model on the two dimensional lattice. This is however far beyond our mathematical understanding. Note that, unlike in the mean-field case, there is no way of reducing the dynamics in the thermodynamic limit to a finite-dimensional dynamical system. Thus  we base our analysis on the asymptotic dynamics of droplets. Unfortunately, we do not have sufficient control on the dynamics of the droplets in the two dimensional case. For this reason we consider the dissipative Ising model in one dimension. It is well known that, at any fixed positive temperature, no spontaneous magnetization occurs. However magnetization can be produced by letting the inverse temperature $\beta$ diverge as the volume $N$ goes to infinity. An elementary computation based on the transfer matrix with positive boundary condition shows that whenever $N = o\left(e^{2\beta}\right)$ the limit magnetization in equilibrium equals one, while magnetizations between zero and one are obtained when $N$ is of order $e^{2\beta}$. 

In this paper we assume the stronger condition 
\[
\frac{\ln N}{\beta} \rightarrow c \in [0,1[.
\]
Note that the condition $c<2$ would suffice for the equilibrium magnetization to be one. However the microscopic dynamics of the dissipative system changes dramatically as $c$ crosses $1$. For $c<1$, starting from all equal spins, a droplet of opposite sign forms and invades the space with high probability before the formation of other droplets. For $1<c<2$, several droplets form and merge before the space is invaded. This last situation is more complicated and requires further work, not yet fully under control. Our analysis is based on the study of the distribution of two stopping times: $T_1$ is the time the first spin flip occurs, i.e., when the first droplet forms; $T_c$ is the time needed after $T_1$ for the initial configuration of, say, all negative spins, to be replaced by all positive spins (covering time). We prove, under a condition weaker than the $c<1$ we just mentioned, that $T_1$, when properly rescaled, has a deterministic limit, with also a control on the fluctuations. Note that this differs from the usual Glauber dynamics with no dissipation, where $T_1$ is simply exponential. When $c<1$, after the occurrence of the first spin flip, with overwhelming probability, the droplet grows with linear speed until it fills the space. This covering time is much smaller than $T_1$, so at the time scale of $T_1$ we observe periodic pulsing between homogeneous configurations.

%We conclude by mentioning that cyclic temporal patterns can be produced also in a mean-field game-theoretic setting \cite{DaPSaTo13, DaPSaTo}.\\

The paper is organized as follows. In Section~\ref{sect:model_and_results} we describe the model under consideration and state our main results. All the proofs are postponed to Section~\ref{sect:proofs}.

\bigskip

%====================================================================
%\section{}
%===================================================================
%%%%%%%%%%%%%%%%%%%%
\section{Description of the model and results}
\label{sect:model_and_results}
%%%%%%%%%%%%%%%%%%%
In this section we present an Ising model with dissipation and we describe the results we aim to prove.\\

Let $\mathcal{S}=\{-1,+1\}$ and consider a configuration of $N$-spins $\underline{\s}\in \mathcal{S}^{\Lambda_N}$, where $$\Lambda_N=\{1,2,\dots,N\}\subseteq \mathbb{Z}$$ represents the set of sites of the spins. We assume periodic boundary condition, i.e. $\sigma_{N}\equiv \sigma_{1}$ and $\sigma_0 \equiv \s_N$.\\
The stochastic Ising model with dissipation $\a \geq 0$ and inverse temperature $\b>0$ is the Markov process 
$(\underline{\s}(t),\underline{\l}(t))_{t\geq 0}$ with values in $\mathcal{S}^{\Lambda_N}\times \R^N$ evolving according to the following dissipated dynamics: at a given time $t\geq 0$, each transition $\sigma_i(t)\to-\sigma_i(t)$, $i\in \Lambda_{N}$, 
occurs with rate 
\begin{equation}\label{rate}
r_i(t):=\exp(-\sigma_i (t) \lambda_i(t)),
\end{equation}
 where $\{\lambda_i(t)\}_{i\in\Lambda_{N}}$ is a family of stochastic processes ({\em local fields}) evolving according to 
\begin{equation}
\label{lambda} d\lambda_i(t)= -\alpha \lambda_i(t) dt + \beta d m_i(t), \:\:\:\:\: i\in\Lambda_{N}
\end{equation}
with $\alpha,\b>0$ and \begin{equation}
\label{magnetization} m_i(t)= \sum_{j\sim i} \sigma_j(t), \:\:\:\:\: i\in\Lambda_{N},
\end{equation}
where $j\sim i$ denotes the set of sites $j$ which are neighbors of $i$ (namely, $i-1$ and $i+1$). 
Formally speaking, $(\underline{\s}(t),\underline{\l}(t))_{t\geq 0}$ is a Markov process with infinitesimal generator
\begin{equation}
\label{ch_three_infgenIsing} \mathcal{L}_Nf(\underline{\s},\underline{\l})=\sum_{i\in \Lambda_N}\exp[-\s_i\l_i]\left(f(\underline{\s}^i,\underline{\l}-2\b \s_i \underline{v}^i)-f(\underline{\s},\underline{\l})\right)-\a\l_i f_{\l_i}(\underline{\s},\underline{\l}),
\end{equation}
where $f_{\l_i}$ represents the partial derivative of $f$ with respect to $\l_i$, $\underline{\sigma}^i$ is the configuration obtained by flipping the state of the $i$-th spin and $\underline{v}^i$ is a $N$-dimensional vector such that
\[
v^i_k=\begin{cases}
1, \:\:\:\: &k=i+1 \mbox{ or } k=i-1,
\\
0, \:\:\:\: &\mbox{otherwise}.
\end{cases}\]
In what follows, we will assume
initial conditions on the form
:
\begin{equation}
\label{ch_three_initialconditions} \s_i(0)=-1, \:\:\:\: \l_i(0)=-\l_{N,\b}(i), \:\:\:\: \mbox{ for any } i\in\Lambda_N.
\end{equation}
\begin{remark}
By taking $\a=0$ (i.e. ruling out dissipation), we obtain a Glauber dynamics for the classical 1-dimensional Ising model with periodic boundary conditions, inverse temperature $\b$ and magnetic fields $\l_i(0)$.
\end{remark}

Our aim is to show that in a suitable large volume - low temperature limit, the total magnetization of the system has a rhythmic behavior after a proper time scaling: we briefly describe the phenomenon here. \\
Assuming initial conditions \eqref{ch_three_initialconditions}, the analysis of the evolution of $(\underline{\s},\underline{\l})_{t\geq 0}$ is divided into two parts. We begin by studying the occurrence time of the first spin flip. Unlike the case with no dissipation ($\a = 0$), where this time is exponentially distributed, the dissipation produces a much higher concentration of the distribution of this time: indeed, it will converge to a deterministic value as $\gamma,N\uparrow+\infty$. After the first spin-flip occurs, the change in the local field and the low temperature ($\b\uparrow+\infty$) favours the growth of a ``droplet'' (just a segment in the one-dimensional case) of $+1$ spins, which invades the whole state space in an extremely short time scale. At this point we are back to the situation of all equal spins. We will show that by assigning the initial local fields $\l_i(0)$ in a suitable way, the local fields at the time the droplet has invaded the space is essentially opposite to the initial one, producing the iteration of the same phenomenon. Since the two parts of the evolution (waiting for the first spin-flip and covering by the droplet) occur on different time scales, we will consider a time-rescaled magnetization process to analyse the macroscopic behavior.\\
To guarantee that the phenomenon described above occurs with overwhelming probability, we will assume that $\b,N \uparrow \infty$ in such a way that ${\ln N\over \beta }\to c\in[0,1[$. This assumption guarantees that, after the first spin-flip, the droplet of $+1$ spins covers the whole space \textit{before} the birth of other droplets. As we will see in Section \ref{section:covertime}, this allows a good understanding for the time taken by the droplet to cover $\Lambda_N$. Indeed, if ${\ln N\over \beta} \to c \in[1,2[$, a single droplet cannot invade the whole space: 
in this case, the box of size $N$ is too big to be covered by a single droplet, and many other droplets of $+1$ spins appear. We believe that this does not rule out periodic behavior, but it makes the analysis considerably harder. \\

In what follows, we will see that in the regime ${\ln N\over \beta }\to c\in[0,1[$ the waiting time for the first spin flip is large, but has small fluctuations. These fluctuations, however, have an impact on the growth time of the droplet. For this reason, while the waiting time of the first spin flip, rescaled by its mean, has a deterministic limit, the rescaled growth time of the droplet keeps some randomness in the limit. Due to this fact, the macroscopic evolution will not be 
strictly
periodic, but it will present regular oscillations with stochastic rhythm.\\

%In Section \ref{section:firstspinflip} we analyse the limiting distribution of the stopping time at which the first spin flip is observed, in Section \ref{section:covertime} we study the time scale at which the covering occurs. Finally, these results will be exploited in Section \ref{oscillation}, where the limiting behavior of the magnetization is presented.

Before stating our main results, we introduce the {\em graphical construction} of the process, that will be useful in the proofs to  couple it with other processes.

Let $\{\mathcal{N}_i\}_{i\in\mathbb{N}}$ be a family of i.i.d. Poisson processes of intensity $e^{4\b}$ and denote the successive arrival times of the $i$-th Poisson process with $\{\tau_{i,n}\}_n$. Each arrival time $\tau_{i,n}$ is associated with a random variable $U_{i,n}$, uniformly distributed on $[0,1]$. The random variables $\{U_{i,n}\}_{i,n}$ are independent among themselves and independent from the Poisson processes $\{\mathcal{N}_i\}_{i\in\mathbb{N}}$. This concludes the construction of the probability space. For a fixed $N>1$, the process $(\underline{\sigma},\underline{\lambda})$ evolves as follows: each site $i\in\Lambda_N$ is associated with the process $\mathcal{N}_i$; then, each point $\tau_{i,n}$ is \textit{accepted} for a spin flip only if 
\[
{\exp[-\sigma_i(\tau_{i,n})\lambda_i(\tau_{i,n})]\over e^{4\b}}
>
U_{i,n}.
\]
Whenever a point $(\tau_{i,n})$ is accepted, the spin at site $i$ is flipped and the values of the local fields are updated in the following way:
\[ \lambda_k(\tau_{i,n})=\begin{cases}
\lambda_k(\tau_{i,n}^-)-\sigma_i(\tau_{i,n}^-)2\beta, \:\:\:\:\:\:\:k=i+1,i-1\\
\lambda_k(\tau_{i,n}^-), \:\:\:\:\:\:\:\:\:\:\:\:\:\:\:\:\:\:\:\:\:\:\:\:\:\:\:\:\:\:\:\:\:\mbox{otherwise}
\end{cases}
\]
At any time in which there is no accepted spin flips, the local fields evolves according to 
\[
 \dot{\lambda}_i(t)= -\alpha \lambda_i(t), \:\:\:\:\: i\in\Lambda_{N}.
\]
One can check that this construction provides the rates prescribed by \eqref{ch_three_infgenIsing}.
Other processes will be later coupled with $(\underline{\sigma},\underline{\lambda})$ using this graphical construction.

\subsection{First spin flip}

In this paper we will prove asymptotic results in the limit as $\beta$ and $N$ go simultaneously to infinity. In this section we assume the {\em low temperature} condition
\begin{equation} \label{betalarge}
\lim \frac{Ne^{-\b}}{\b} = 0,
\end{equation}
that is weaker that what we will assume later on. Here and later, ``$\lim$'' stands for ${\displaystyle{\lim_{N,\beta \uparrow +\infty}}}$.

We begin by considering initial conditions of the form
\begin{equation} \label{inhom_lambdas}
\sigma_i(0)=-1\,,\qquad
	\lambda_i(0)\,=\,\lambda_{N,\beta}(i) \,= \, \l_{N,\b} + o(i,N,\b)
	\,,
	\qquad
1\leq i\leq N\, ,
\end{equation}
where $\l_{N,\b}$ is a family of real numbers, $o(i,N,\b)$ is a family of real random variables such that, as $N$ and $\beta$ go simultaneously to infinity, the following condition holds:
\begin{equation} \label{nearlyc}
	\forall\e>0\qquad
\lim P\left( \sup_i |o(i,N,\b)| > \e\right) = 0\,.
\end{equation}
This last condition states that the initial local fields are {\em nearly constant}. More general initial conditions for the local fields will be considered later. To avoid unnecessary complications, we also assume that the limit
\[
\lim \frac{\l_{N,\b}}{\ln N}
\]
exists.

Our first goal is to study the time $T_1$ of the first spin flip, defined as
$$T_1\,=\,\inf\,\big\{\,t\geq 0:\exists\,i\in \uN,\:\sigma_i(t)=1\,\big\}\,.$$

\begin{theorem} \label{th:firstspinflip}
Under assumptions \eqref{betalarge}, \eqref {inhom_lambdas} and \eqref{nearlyc}, we have the following asymptotic behavior:
\begin{itemize}
\item[(a)] 
if $\lim \frac{\l_{N,\b}}{\ln N} < -1$ then
\begin{equation} \label{xNbeta}
X_{N,\beta}:= \a \ln N \left( T_1 - \frac{1}{\a} \ln \left( \frac{- \l_{N,\beta}}{\ln N}\right)- \frac{\ln \ln N}{\a \ln N} - \frac{ \ln \a}{\a \ln N}\right)
\
\end{equation}
converges in distribution as $\b,N \ra +\infty$ to a random variable $X$ whose distribution is given by
\begin{equation} \label{X}
P(X>x) = \exp\left(- e^x \right);
\end{equation}
\item[(b)]
if $\lim \frac{\l_{N,\b}}{\ln N} \geq -1$ then $T_1$ converges to zero in probability as $\b,N \ra +\infty$.
\end{itemize}

\end{theorem}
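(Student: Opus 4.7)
The plan is to exploit the fact that, before any spin has flipped, the dynamics is completely explicit: every $\sigma_i$ still equals $-1$ and the local fields obey $\dot\lambda_i = -\alpha \lambda_i$, so $\lambda_i(t) = \lambda_i(0)\, e^{-\alpha t}$. Each flip rate is then the deterministic function $r_i(t) = \exp(\lambda_i(0)\, e^{-\alpha t})$, and $T_1$ is the first event of an inhomogeneous Poisson process with total intensity $R(t) = \sum_{i=1}^N r_i(t)$. Conditionally on $\underline\lambda(0)$,
\[
P\bigl(T_1 > t \,\big|\, \underline\lambda(0)\bigr) \,=\, \exp\!\left(-\int_0^t R(s)\,ds\right),
\]
and the whole theorem reduces to an asymptotic analysis of this single integral.

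To absorb the perturbation $o(i,N,\beta)$, I would condition on the event $A_\delta = \{\sup_i |o(i,N,\beta)| \le \delta\}$, whose probability tends to $1$ by \eqref{nearlyc}. On $A_\delta$, since $e^{-\alpha s} \in (0,1]$, one has uniformly $r_i(s) = \exp(\lambda_{N,\beta}\, e^{-\alpha s})\,(1+O(\delta))$, hence $R(s) = N \exp(\lambda_{N,\beta}\, e^{-\alpha s})\,(1+O(\delta))$. The substitution $z = -\lambda_{N,\beta}\, e^{-\alpha s}$ (valid because $\lambda_{N,\beta}$ is eventually strictly negative) transforms the integral into
\[
\int_0^t R(s)\,ds \,=\, \bigl(1+O(\delta)\bigr)\,\frac{N}{\alpha}\int_{-\lambda_{N,\beta}\, e^{-\alpha t}}^{-\lambda_{N,\beta}} \frac{e^{-z}}{z}\,dz.
\]

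For part (a), set $T_1^* = \alpha^{-1}\ln(-\lambda_{N,\beta}/\ln N) > 0$ and parametrize $t = T_1^* + y/(\alpha \ln N)$, so that the lower endpoint of the integral equals $\ln N \cdot e^{-y/\ln N} = \ln N - y + o(1)$. Using the standard asymptotic of the exponential integral $\int_a^{\infty} e^{-z}/z\, dz \sim e^{-a}/a$ as $a \to \infty$, together with the fact that $-\lambda_{N,\beta} \gg \ln N$ so that the upper cut-off contributes only lower-order terms, one obtains
\[
\int_{-\lambda_{N,\beta}\, e^{-\alpha t}}^{-\lambda_{N,\beta}} \frac{e^{-z}}{z}\,dz \,\sim\, \frac{e^{y}}{N\ln N},
\]
and therefore $\int_0^t R(s)\, ds \to e^y/(\alpha \ln N)$. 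Choosing $y = x + \ln\ln N + \ln \alpha$ makes this limit equal to $e^x$, so $P(T_1 > t \mid A_\delta) \to \exp\bigl(-(1+O(\delta))\, e^x\bigr)$. Letting $N,\beta\to\infty$ and then $\delta \to 0$ produces the required $\exp(-e^x)$ limit for $X_{N,\beta}$.

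For part (b), if $\lim \lambda_{N,\beta}/\ln N > -1$ then $R(0) = N\exp(\lambda_{N,\beta})(1+o(1))$ is already polynomially large in $N$ and $T_1$ is pushed to $0$ immediately, while if the limit equals $-1$ a similar estimate of the integral on any interval $[0,\epsilon]$ shows $\int_0^\epsilon R(s)\, ds \to \infty$, again forcing $T_1 \to 0$ in probability. The main technical step is the Laplace-type asymptotic above: one must verify that the contribution from $z \gg \ln N$ is genuinely negligible uniformly across the admissible range of $\lambda_{N,\beta}$, and that the multiplicative $(1+O(\delta))$ error arising from the nearly-constant initial fields washes out in the final double limit. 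Condition \eqref{betalarge} plays a background role, guaranteeing that the time window $T_1^* + O(1/\ln N)$ stays in the regime where the deterministic description of $\underline\lambda(t)$ before $T_1$ is applicable.
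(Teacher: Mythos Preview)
Your proposal is correct and follows essentially the same route as the paper: both compute $P(T_1>t)=\exp\bigl(-\sum_i\int_0^t e^{\lambda_i(0)e^{-\alpha s}}\,ds\bigr)$, extract the asymptotic $\int_0^t e^{-\gamma e^{-\alpha s}}\,ds \sim \frac{e^{-\gamma e^{-\alpha t}}}{\alpha\gamma e^{-\alpha t}}$, absorb the nearly-constant perturbation as a multiplicative $(1+o(1))$, and then tune $t$ so that $N$ times this quantity converges to $e^x$. The only cosmetic difference is that the paper obtains the integral asymptotic by an integration by parts, while you substitute $z=-\lambda_{N,\beta}e^{-\alpha s}$ and invoke the standard expansion of the exponential integral $E_1(a)\sim e^{-a}/a$; for part~(b) the paper argues by monotonicity in $\lambda_{N,\beta}$ from the case already treated, whereas you check directly that $\int_0^\varepsilon R(s)\,ds\to\infty$, which is equally valid. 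One small inaccuracy: your closing remark about condition~\eqref{betalarge} is off---the deterministic description of $\underline\lambda$ before $T_1$ holds by definition of $T_1$; that hypothesis is rather a low-temperature condition ensuring the relevant error terms vanish.
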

Theorem \ref{th:firstspinflip} states that, provided $-\l_{N,\b}$ is sufficiently large, 
\[
T_1 = \frac{1}{\a} \ln \left( \frac{- \l_{N,\beta}}{\ln N}\right) +  \frac{\ln \ln N}{\a \ln N}  + \frac{\ln \a}{ \a \ln N} + \frac{X_{N,\beta}}{\a \ln N} + o\left(\frac{1}{\ln N} \right).
\]
We will later choose $\l_{N,\beta}$ in such a way that $\ln \left( \frac{- \l_{N,\beta}}{\ln N}\right)$ converges to a strictly positive constant.

To analyze
the evolution of the system after $T_1$, we will also compute the value of the local fields immediately before the first spin flip:
\begin{equation} \label{lambdaT1bis}
\l_i(T_1^-) = \l_i(0)e^{-\a T_1} = - \ln N + \ln \ln N + \ln \a + X_{N,\b} + o(i,N,\b),
\end{equation}
where $o(i,N,\b)$ satisfies \eqref{nearlyc}. Thus the initial value $\l_{N,\beta}$ is essentially ``forgotten'' at time $T_1$.

\subsection{Covering time} \label{sec:covtime}

Now we study the evolution of the spin system after time $T_1$, so consider the processes
$(\tilde{\underline{\s}}(t),\tilde{\underline{\l}}(t))_{t\geq 0}$ such that $$ \begin{array}{c} \tilde{\sigma}_i(t)=\sigma_i(t+T_1)\\ \tilde{\lambda}_i(t)=\lambda_i(t+T_1)
\end{array} \:\:\:\:\:\:\:\:\:\:\:\: t\geq 0,\:\: i\in\Lambda_{N}.$$
By the strong Markov property, the evolution of $(\tilde{\underline{\s}}(t),\tilde{\underline{\l}}(t))_{t\geq 0}$ is still described by \eqref{rate} and \eqref{lambda}. Define
\[
T_c := \inf\{t > 0 : \tilde{\sigma}_i(t) = 1 \ \mbox{ for all } i\in\Lambda_N\}
\]
the time needed to reach the homogeneous configuration with all spins equal to $+1$. The following theorem describes the asymptotic behavior of $T_c$ as $\b,N \uparrow +\infty$, and it implies, in particular, that $T_c \rightarrow 0$ as $N \ra +\infty$. In what follows we assume \eqref{inhom_lambdas} as initial condition for the local fields.

\begin{theorem} \label{th:covering}
Let $\b,N \uparrow +\infty$ in such a way that
\begin{equation}\label{condbeta}
\lim \frac{\ln N}{\b} = c\,, \ \ \ \lim \frac {\l_{N,\b}}{\ln N} < -1\,,
\end{equation}
with $c\in[0,1[$,
and assume that conditions \eqref{inhom_lambdas} and \eqref{nearlyc} hold.
Then
\begin{equation}\label{randscal}
\frac{T_c}{\frac{N^2}{2\a \ln N}e^{-2\b- X_{N,\beta}}} \ \ \overset{P}{{\longrightarrow}} \ \ 1
\end{equation}
in probability, and
\begin{equation}\label{detscal}
\frac{T_c}{\frac{N^2}{2\a \ln N}e^{-2\b}} \ \ \overset{P}{{\longrightarrow}} \ \ Z,
\end{equation}
where $X_{N,\b}$ is defined in \eqref{xNbeta} and $Z$ is a random variable distributed as $e^{-X}$, with $X$ being the random variable introduced in \eqref{X}. Moreover
\begin{equation} \label{newfield}
\l_i(T_1+T_c) = 4\b - \ln N + \ln \ln N + \a + X_{N,\b} + o(i,N,\b),
\end{equation}
where $o(i,N,\b)$ satisfies \eqref{nearlyc}.
\end{theorem}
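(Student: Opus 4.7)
The proof proceeds by analyzing the growth of the unique droplet that forms at time $T_1$ as a sequence of ``boundary flips'' happening at an essentially constant rate, while excluding the formation of competing droplets. By Theorem~\ref{th:firstspinflip}, in particular \eqref{lambdaT1bis}, at time $T_1$ every $-1$ spin has local field near $-\ln N+\ln\ln N+\ln\a+X_{N,\b}$, save for the two neighbors of the single $+1$ spin, which have received a $+2\b$ boost from that flip and therefore sit near $2\b-\ln N+\ln\ln N+\ln\a+X_{N,\b}$. Plugging into \eqref{rate}, the corresponding flip rates are
\[
r\;=\;\frac{\a\ln N}{N}\,e^{2\b+X_{N,\b}}(1+o(1))
\]
for each of the two boundary $-1$ spins, $o(r)$ for every other $-1$ spin, and $o(1)$ for every $+1$ spin.

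The first key step is to show that on $[T_1,T_1+T_c]$ only the two boundary spins of the droplet actually flip. The condition $c<1$ gives $N^2 e^{-2\b}=o(1)$, hence the predicted covering time $T_c^{\mathrm{heur}}:=\frac{N^2}{2\a\ln N}e^{-2\b-X_{N,\b}}$ is $o(1)$. On such a short time scale the dissipative decay of the local fields is inconsequential ($e^{-\a T_c}=1+o(1)$), so the above flip rates are essentially constant in $t\in[T_1,T_1+T_c]$. A union bound over the non-boundary $-1$ sites gives an expected number of spurious nucleations of order
\[
N\cdot\frac{\a\ln N}{N}e^{X_{N,\b}}\cdot T_c^{\mathrm{heur}}\;=\;\tfrac12\,N^2 e^{-2\b}\;=\;o(1),
\]
and an analogous, weaker estimate rules out any reverse flip inside the growing droplet (the total expected number is of order $(N^2 e^{-2\b})^2/(\ln N)^2$). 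Hence, with probability $1-o(1)$, the evolution between $T_1$ and $T_1+T_c$ consists solely of successive boundary flips that enlarge the droplet until it covers $\uN$.

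On this good event, just after each boundary flip the new boundary $-1$ site sees its previously (barely) decayed field boosted by $+2\b$ from the freshly flipped neighbor, landing again at $2\b-\ln N+\ln\ln N+\ln\a+X_{N,\b}+o(1)$. Therefore the $N-1$ inter-flip times are, to leading order, i.i.d.\ exponentials of parameter $2r$, and the standard law of large numbers (variance equals $\frac{1}{N-1}$ times the squared mean) gives $T_c/[(N-1)/(2r)]\to 1$ in probability. Since $(N-1)/(2r)\sim\frac{N^2}{2\a\ln N}e^{-2\b-X_{N,\b}}$, this proves \eqref{randscal}. Combining with the convergence $X_{N,\b}\to X$ in distribution from Theorem~\ref{th:firstspinflip} and the continuous mapping theorem yields \eqref{detscal} with $Z=e^{-X}$. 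Finally, \eqref{newfield} follows from the observation that each site $i\in\uN$ has both of its neighbors flip from $-1$ to $+1$ exactly once on $[T_1,T_1+T_c]$, each flip adding $+2\b$ to $\l_i$; since $T_c=o(1)$ the continuous dissipative decay contributes only $o(1)$, and from \eqref{lambdaT1bis} we obtain $\l_i(T_1+T_c)=4\b-\ln N+\ln\ln N+\ln\a+X_{N,\b}+o(i,N,\b)$.

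The main obstacle is the no-nucleation bound of the second paragraph. It relies essentially on $c<1$ to kill $N^2 e^{-2\b}$, and on the uniform control \eqref{nearlyc} to ensure that all non-boundary $-1$ spins have comparable rates. The delicate technical point is to turn the pointwise rate estimate into a uniform-in-$t$ bound on $[T_1,T_1+T_c]$; this is naturally handled via the graphical construction, dominating all non-boundary rates by a deterministic function of $t$ and comparing the resulting thinned Poisson process with one of constant intensity.
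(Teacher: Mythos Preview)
Your overall strategy matches the paper's: restrict to boundary-only growth, then apply a law of large numbers to the resulting $N-1$ inter-flip times. The two places where you wave your hands are precisely where the paper invests its effort.

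For the no-nucleation bound, your global expected-count argument over $[T_1,T_1+T_c^{\mathrm{heur}}]$ has a mild circularity: until you know no spurious flip occurs you do not yet know that $T_c$ is comparable to $T_c^{\mathrm{heur}}$, nor that the non-boundary rates remain near $\frac{\alpha\ln N}{N}e^{X_{N,\beta}}$ (dissipation \emph{increases} these rates). The paper avoids this by arguing stage-by-stage: at each droplet size $k$ it compares the total ``bad'' intensity $I_k(t)$ against the boundary intensity $J(t)$, introduces a deterministic cutoff $T_m=e^{-d\beta}$ with $2c+d<2$, bounds the probability that the bad flip wins by $I_k(T_m)/(I_k(T_m)+J(T_m))$, and shows the sum over $k$ is $o(1)$. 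Your graphical-construction remark is the right fix; the clean ordering is to first prove the LLN for the suppressed process $\hat\sigma$, and only then couple it to $\sigma$ on $[0,\hat T_c]$.

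For the LLN step, the inter-flip times are not i.i.d.: conditionally on the past, the $n$-th one is exponential with a parameter depending on $\tau_1,\dots,\tau_{n-1}$ through the accumulated dissipation. The paper supplies a dedicated lemma (Lemma~\ref{lemma:sum}) that sandwiches a sum of such conditionally-dominated increments between sums of genuine i.i.d.\ exponentials, on the event $B_N=\{\tau_n-\tau_{n-1}\le e^{-d\beta}/N\ \text{for all }n\}$. Your phrase ``to leading order, i.i.d.'' needs exactly this stochastic-domination device to become a proof.

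A minor point: your final formula $\lambda_i(T_1+T_c)=4\beta-\ln N+\ln\ln N+\ln\alpha+X_{N,\beta}+o(i,N,\beta)$ is what actually follows from \eqref{lambdaT1bis}; the ``$\alpha$'' in the statement of \eqref{newfield} appears to be a typo for ``$\ln\alpha$''.
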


\begin{remark} \label{rem:droplet}
The proof of Theorem \ref{th:covering} is based on the fact that, with probability that goes to one as $N \ra +\infty$, the droplet of spin $+1$ that forms at time $T_1$ grows at nearly constant speed up to the covering time $T_c$. This fact holds true, with no change in the proof, even if 
\[
\lim \frac {\l_{N,\b}}{\ln N} \geq  -1.
\]
In this case $\l_i(T_1^-) \simeq \l_{N,\b}$ as $T_1 \simeq 0$, so
\[
\l_i(T_1+T_c) = 4\b + \lambda_{N,\beta} + o(i,N,\b).
\]
Thus, at time $T_1+T_c$ the state of the system is the same as the initial state~\eqref{inhom_lambdas}
	with all signs changed: all spins equal $+1$ and the local fields are nearly constant. Moreover 
\[
\lim \frac{\l_i(T_1+T_c)}{\ln N} >1,
\]
due to the condition $\lim \frac{\ln N}{\b} = c \in [0,1]$. This allows to iterate the analysis.

\end{remark}

\subsection{Oscillating behavior}

The result of Section \ref{sec:covtime} shows that starting with an initial condition which is constant (say $-1$) for the spins  and nearly constant for the local fields, after two droplet expansions, with probability that goes to one as $N \ra +\infty$, the systems reaches a state of the form \eqref{inhom_lambdas}, with 
\begin{equation} \label{newlambda}
\l_i = -4 \b + \ln N - \ln \ln N + O(i,N)
\end{equation}
where $O(i,N)$ is a bounded correction. It is therefore natural to assume $\l_i(0)$ to be as in \eqref{newlambda}. By the results above the following facts follow.

\begin{theorem}\label{thm:many_stopping}
Let $ \gamma_{N,\b}=4\beta -\ln N +\ln\ln N$ and take the initial conditions
\[ \sigma_i(0)=-1,\:\:\:\:\:\lambda_i(0)=-\gamma_{N,\b} + O(i,N),\:\:\:\:\:\:\:\:\:\: i\in\Lambda_N.\]
%as in \eqref{newlambda}.
Fix $n\in\mathbb{N}$ and define the following stopping times, for $j=1,\dots,n$
\[ 
\begin{split}
T_{1,j}&:=\inf\left\{t>\sum_{k=0}^{j-1}(T_{1,k}+T_{c,k})\:\Big|\: \sigma_i(t)=(-1)^{j+1} \:\: \mbox{\em{for some}}\:\: i\in\Lambda_N\right\}-\sum_{k=0}^{j-1}(T_{1,k}+T_{c,k}),\\
T_{c,j}&:=\inf\left\{  t > T_{1,j}+\sum_{k=0}^{j-1}(T_{1,k}+T_{c,k}) \:\Big|\: {\sigma}_i(t) = (-1)^{j+1} \:\: \mbox{\em{for all}}\:\: i\in\Lambda_N\right\}-\sum_{k=0}^{j-1}(T_{1,k}+T_{c,k})
\end{split}
\]
with $T_{1,0}=T_{c,0}=0$. Let $\{Y_i\}_{i=1}^n$ be a sequence of i.i.d. random variables distributed according to 
$$P(Y_1>y)=\exp\left(-e^y\right), \:\:\:\:\:\:\:\:\forall\:\: y \in \mathbb{R}.$$
Suppose $\beta,N\uparrow +\infty$ with the condition \[ \lim_{\beta,N}{\ln N \over \beta}=c\in[0,1[.\]
Then, for any $j=1,\dots,n$, 
\[
\begin{split}
\alpha \ln N \left(T_{1,j}- \frac{1}{\a} \ln \left( \frac{- \g_{N,\b}}{\ln N}\right)- \frac{\ln \ln N}{\a \ln N} - \frac{ \ln \a}{\a \ln N}\right) \overset{d}{\underset{\gamma,N\uparrow+\infty}{\longrightarrow}} Y_j\\
 {T_{c,j}\over {N^2\over 2\alpha \ln N} e^{-2\beta}} \overset{d}{\underset{\b,N\uparrow+\infty}{\longrightarrow}} Z_j,
\end{split}
\]
where $Z_j$ is distributed as $e^{-Y_j}$.
\end{theorem}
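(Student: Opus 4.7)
The plan is to argue by induction on $j$, combining the strong Markov property of $(\underline\sigma,\underline\lambda)$ applied at the stopping times $\tau_j:=\sum_{k=0}^{j-1}(T_{1,k}+T_{c,k})$ with the symmetry of the generator \eqref{ch_three_infgenIsing} under the involution $(\underline\sigma,\underline\lambda)\mapsto(-\underline\sigma,-\underline\lambda)$. The two Theorems \ref{th:firstspinflip} and \ref{th:covering} will then be invoked cycle by cycle.

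The base case $j=1$ follows directly from Theorems \ref{th:firstspinflip} and \ref{th:covering}: the prescribed initial condition $\sigma_i(0)=-1$, $\lambda_i(0)=-\gamma_{N,\beta}+O(i,N)$ fits \eqref{inhom_lambdas} with deterministic constant $\lambda_{N,\beta}=-\gamma_{N,\beta}$ and a uniformly bounded perturbation that trivially satisfies \eqref{nearlyc}. Since $\ln N/\beta\to c\in[0,1)$, one has $\gamma_{N,\beta}/\ln N\to(4-c)/c>1$ (or $+\infty$ for $c=0$), so the hypothesis $\lim\lambda_{N,\beta}/\ln N<-1$ and the low-temperature condition \eqref{betalarge} are both met. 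This yields the stated limits for $T_{1,1}$ and $T_{c,1}$ with an underlying $X_{N,\beta,1}\Rightarrow Y_1$.

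For the inductive step, assuming the result up to index $j$, I would use \eqref{newfield} (applied inside cycle $j$ and composed with the sign involution when $j$ is even) to see that at time $\tau_{j+1}$
\[
\sigma_i(\tau_{j+1})=(-1)^{j+1},\qquad \lambda_i(\tau_{j+1})=(-1)^{j+1}\bigl(\gamma_{N,\beta}+\alpha+X_{N,\beta,j}\bigr)+o(i,N,\beta),
\]
with $o(i,N,\beta)$ still obeying \eqref{nearlyc}. The strong Markov property together with the sign involution then presents the process after $\tau_{j+1}$ as a copy of the dynamics started from \eqref{inhom_lambdas} with random constant $\lambda_{N,\beta}^{(j+1)}:=-(\gamma_{N,\beta}+\alpha+X_{N,\beta,j})$. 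Since $X_{N,\beta,j}$ is tight and $\gamma_{N,\beta}\to\infty$, one still has $\lambda_{N,\beta}^{(j+1)}/\ln N\to-(4-c)/c<-1$ in probability, so Theorems \ref{th:firstspinflip} and \ref{th:covering} re-apply (in a form allowing the scale $\lambda_{N,\beta}$ to be random with a deterministic probabilistic limit, obtained by a standard conditioning/subsequence argument). This produces the claimed rescalings for $T_{1,j+1}$ and $T_{c,j+1}$ together with a new $X_{N,\beta,j+1}\Rightarrow Y_{j+1}$; independence of $Y_{j+1}$ from $(Y_1,\ldots,Y_j)$ is automatic, since $X_{N,\beta,j+1}$ is measurable with respect to the graphical construction strictly after $\tau_{j+1}$, which is independent of $\mathcal F_{\tau_{j+1}}$.

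The hard part will be the careful propagation of error terms across cycles. The constant $\lambda_{N,\beta}^{(j+1)}$ is random, differing from the deterministic reference $-\gamma_{N,\beta}$ by the $O_P(1)$ quantity $-(\alpha+X_{N,\beta,j})$; one must verify that the Taylor expansion of $\ln(-\lambda_{N,\beta}^{(j+1)}/\ln N)$ around $\ln(\gamma_{N,\beta}/\ln N)$ produces a correction whose contribution to the rescaling $\alpha\ln N(\,\cdot\,)$ appearing in the statement is either absorbed into the definition of $X_{N,\beta,j+1}$ or shown to be negligible in the regime $\ln N/\beta\to c<1$, and similarly for the factor $e^{-X_{N,\beta,j+1}}$ entering the normalization of $T_{c,j+1}$. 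All remaining ingredients—measurability of $X_{N,\beta,j+1}$ with respect to the post-$\tau_{j+1}$ Poisson construction, preservation of \eqref{nearlyc} through each cycle, and matching the sign conventions between odd and even $j$—are routine consequences of the strong Markov property and of the sign symmetry of \eqref{ch_three_infgenIsing}.
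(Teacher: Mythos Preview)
Your approach is exactly the one the paper intends: the paper gives no separate proof of Theorem~\ref{thm:many_stopping}, merely stating ``By the results above the following facts follow'' after the discussion culminating in \eqref{newlambda}. Your inductive scheme---strong Markov at the successive times $\tau_j$, the sign involution $(\underline\sigma,\underline\lambda)\mapsto(-\underline\sigma,-\underline\lambda)$ of \eqref{ch_three_infgenIsing}, then reapplication of Theorems~\ref{th:firstspinflip} and~\ref{th:covering} together with \eqref{newfield}---is precisely the iteration implicit in that sentence and in Remark~\ref{rem:droplet}.

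One comment on what you flag as ``the hard part''. The random shift $\lambda_{N,\beta}^{(j+1)}=-(\gamma_{N,\beta}+\alpha+X_{N,\beta,j})$ differs from $-\gamma_{N,\beta}$ by an $O_P(1)$ quantity, and this produces in the centering the correction
\[
\ln N\cdot\ln\Bigl(1+\frac{\alpha+X_{N,\beta,j}}{\gamma_{N,\beta}}\Bigr)\;=\;\frac{\ln N}{\gamma_{N,\beta}}\,(\alpha+X_{N,\beta,j})\bigl(1+o(1)\bigr)\;\longrightarrow\;\frac{c}{4-c}\,(\alpha+Y_j).
\]
This is genuinely $o_P(1)$ only when $c=0$; for $c\in(0,1)$ it is an $O_P(1)$ contribution that is \emph{not} independent of the previous cycle. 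Your sketch allows for this (``absorbed into the definition of $X_{N,\beta,j+1}$''), but absorbing it makes the limiting law of $X_{N,\beta,j+1}$ a shift of $X$ by $\tfrac{c}{4-c}(\alpha+Y_j)$, which is neither the distribution \eqref{X} nor independent of $Y_j$. The paper does not address this point either; it is a place where the statement, as written, is somewhat informal for $c>0$, and your identification of it as the delicate step is accurate.
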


These results show that the system is governed by
two time scales. The first spin flip is concentrated around the time
\[
t(1,N) = \frac{1}{\a} \ln \left( \frac{- \g_{N,\b}}{\ln N}\right)- \frac{\ln \ln N}{\a \ln N} - \frac{ \ln \a}{\a \ln N} \sim \frac{1}{\a} \ln \left(\frac{4-c}{c} \right) 
\]
as $N \ra +\infty$, $\frac{\ln N}{\b} \ra c \in [0,1[$ (note that if $c=0$ then $t_N \ra +\infty$). The droplet expansion occurs at the time scale
\[
t(c,N) = \frac{N^2 e^{-2\b}}{2\a \ln N},
\]
which goes to zero as $N \ra +\infty$,{$\frac{\ln N}{\b} \ra c \in [0,1[$}. This suggests to consider the time change for the magnetization process
\[
m_N(t) := \frac{1}{N} \sum_{i \in \L_N} \s_i(t)
\]
given by
\begin{equation} \label{timescaling}
\theta_N(t) = t(1,N) \int_0^t {\bf 1}_{\{|m_N(t)| = 1\}}dt +  t(c,N) \int_0^t {\bf 1}_{\{|m_N(t)| < 1\}}dt,
\end{equation}

which ``speeds up" time whenever all the spins are equal and we are waiting for the following flip and ``slows down" time whenever we are observing the very fast invasion of a droplet of spins of the opposite sign. 
Then, we define a time-scaled version of the total magnetization process by
\begin{equation}\label{tilde_m}
\tilde{m}_N(t):= m_N(\theta_N(t)).
\end{equation} 
By Theorem \ref{thm:many_stopping} and the analysis performed in the proof of Theorem \ref{th:covering}, we expect that the process $\tilde{m}_N$ converges to a stochastic process $\tilde{x}$ with the following behavior: $\tilde{x}(0)=-1$, then it does not move for a unit of time, then it takes a random time $Z_1$ to linearly grow from $-1$ to $+1$; after reaching $+1$, it does not move for a unit of time, then it takes a random time $Z_2$ to linearly decrease from $+1$ to $-1$ and so on, where the random variables $Z_1,Z_2,\dots$ are given in Theorem \ref{thm:many_stopping}. We expect a linear profile between $-1$ and $+1$ and also between $+1$ and $-1$
since in the proof of Theorem \ref{th:covering} we saw that during the growth of the droplet each step occurs essentially at the same time. 
%A graphical intuition on the behavior of the limiting dynamics is given in Figure \ref{ch_three_pictureIsing}.

Let us give a formal definition of the limiting process $\tilde{x}$: consider the deterministic trajectory $x(t)$ such that
\[  x(t)=\begin{cases}
-1  \:\:\: &\mbox{ for }t\in [0,1[,\\
2t-3  \: &\mbox{ for }t\in [1,2[,\\
+1 \: &\mbox{ for }t\in [2,3[,\\
-2t+7 \: &\mbox{ for }t\in [3,4[,\\
\end{cases}\]
and then extended periodically on $\mathbb{R_+}$ for $t\geq 4$. Then, consider the family of random variables $\{Z_i\}_{i\geq 1}$ defined in Theorem \ref{thm:many_stopping} and define the following time-changing process:\begin{equation}
	\label{phi_x} \phi(t)=\int_0^t\Big( {\bf 1}_{\{|x(s)|=1\}} + \sum_{i\geq 1} Z_i^{-1} {\bf 1}_{\{ s\in [2i-1,2i[\}}\Big) ds\,.  
\end{equation}
Finally, the limiting process is defined as 
\begin{equation}
\label{tilde_x}
\tilde{x}(t)=x(\phi(t)).
\end{equation}
\begin{theorem}\label{thm:macroscopic_oscillations}
Let $\gamma$ and $\{Z_i\}_{i\geq 1}$ as in Theorem \ref{thm:many_stopping}. Suppose $\b,N\uparrow+\infty$ with the condition ${\ln N\over \b}=c\in[0,1[$. Then, for any $T>0$, the process $(\tilde{m}_N(t))_{t\in[0,T]}$ defined by \eqref{tilde_m} converges, in the
	sense of weak convergence of stochastic processes, to $(\tilde{x}(t))_{t\in[0,T]}$ defined by \eqref{phi_x}-\eqref{tilde_x}.
\end{theorem}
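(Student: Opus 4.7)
My plan is to reduce the convergence of $\tilde m_N$ to $\tilde x$ to the joint convergence of the finitely many rescaled stopping times provided by Theorem~\ref{thm:many_stopping}, together with a uniform control of the linear growth of the magnetization during each droplet expansion, and then conclude via a continuous mapping argument on $D([0,T])$. Fix $T>0$ and choose $n$ large enough that, with probability tending to one, the $n$-th cycle has not yet ended at rescaled time $T$.

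\textbf{Step 1 (Joint convergence of the normalized stopping times).} First I would extend Theorem~\ref{thm:many_stopping} to joint convergence over $j=1,\dots,n$. At each time $\sum_{k<j}(T_{1,k}+T_{c,k})$ the strong Markov property combined with the conclusion \eqref{newfield} of Theorem~\ref{th:covering} shows that the system restarts in the initial form \eqref{inhom_lambdas} (with signs alternated and nearly constant fields). Applying Theorems~\ref{th:firstspinflip}--\ref{th:covering} iteratively, the successive cycles become asymptotically independent, so that
\[
\Bigl(\frac{T_{1,j}}{t(1,N)},\,\frac{T_{c,j}}{t(c,N)}\Bigr)_{j=1}^{n}\;\xrightarrow{d}\;(1,Z_j)_{j=1}^{n},
\]
with $Z_j$ i.i.d. distributed as $e^{-Y_j}$. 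Note that $T_{1,j}/t(1,N)\to 1$ in probability because the fluctuations of $T_{1,j}$ are of order $1/(\alpha\ln N)$, negligible against $t(1,N)$.

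\textbf{Step 2 (Linear profile during droplet expansion).} From the proof of Theorem~\ref{th:covering} and Remark~\ref{rem:droplet}, after the $j$-th first flip the droplet grows on both sides at nearly constant speed until it fills $\Lambda_N$. Concretely, the $N$ successive step times are, up to lower-order error, i.i.d.\ exponentials with a common mean that is within $o(1)$ of the average $T_{c,j}/N$. A standard Glivenko--Cantelli-type estimate for empirical distribution functions of independent exponentials then gives
\[
\sup_{s\in[0,T_{c,j}]}\Bigl|\,m_N\bigl(\textstyle\sum_{k<j}(T_{1,k}+T_{c,k})+T_{1,j}+s\bigr)\;-\;\bigl((-1)^{j}+(-1)^{j+1}\,2s/T_{c,j}\bigr)\,\Bigr|\;\xrightarrow{P}\;0.
\]
This is the quantitative linear-profile statement I need.

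\textbf{Step 3 (Reading off the time-change and concluding).} By \eqref{timescaling}--\eqref{tilde_m}, on the $j$-th flat rescaled interval $\tilde m_N$ equals $(-1)^{j}$ over a length $T_{1,j}/t(1,N)\to 1$, while on the $j$-th growth interval $\tilde m_N$ coincides with $m_N\!\circ\!\theta_N$ sped up by $t(c,N)$; hence by Step~2 it is within $o_P(1)$ of the linear function with slope $\pm 2/Z_{j,N}$, where $Z_{j,N}:=T_{c,j}/t(c,N)$. Thus $\tilde m_N$ is, up to a uniformly vanishing error, the piecewise linear trajectory obtained by concatenating the $n$ pairs (flat segment of length $T_{1,j}/t(1,N)$, monotone linear segment of length $Z_{j,N}$ between $\pm 1$). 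The same construction applied to $(1,Z_j)_{j=1}^n$ yields exactly $\tilde x$ on $[0,T]$. Since this concatenation is a continuous map from $(0,\infty)^{2n}$ into $D([0,T])$ equipped with the Skorokhod topology (at points where no breakpoint coincides with $T$, which is almost sure for the limit since $Z_j$ has a continuous law), Step~1 and the continuous mapping theorem yield $\tilde m_N \Rightarrow \tilde x$ on $[0,T]$.

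\textbf{Main obstacle.} The delicate point is the uniform linearity of Step~2. While the endpoints $-1$ and $+1$ are handled directly by Theorem~\ref{th:covering}, one needs to prove that the intermediate magnetization stays close to the linear interpolation, uniformly in time and across all $n$ cycles. This requires controlling that the empirical durations of the $N$ droplet-growth steps within each cycle are well approximated by their common mean; the fact that the step rates depend on the updated local fields at the two growing endpoints means that one must track the decay and renewal of these fields along the expansion, which is exactly the coupling argument carried out in the proof of Theorem~\ref{th:covering} but now needed with uniform control rather than at the endpoint $T_c$ only.
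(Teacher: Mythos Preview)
Your approach is correct in outline but genuinely different from the paper's. You argue by (i) joint convergence of the rescaled stopping times, (ii) a law-of-large-numbers/Glivenko--Cantelli control giving uniform linearity of the magnetization during each expansion, and (iii) a continuous-mapping argument on $D([0,T])$ through the concatenation of flat and linear pieces. The paper instead works one cycle at a time and sandwiches $\tilde m_N$ between two auxiliary magnetizations $\tilde m_N^{\pm}$ built via the graphical construction: after $T_1$ the local fields of the boundary sites are replaced by the constant upper and lower bounds $2\beta+L_N e^{-\alpha e^{-d\beta}}$ and $2\beta e^{-\alpha e^{-d\beta}}+L_N$, so that $\tilde m_N^{\pm}$ become rescaled Poisson processes with random but time-constant intensity, for which convergence to $\tilde x$ is immediate. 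On the good event $A_N\cap B_N$ the monotone coupling gives $\tilde m_N^-\le \tilde m_N\le \tilde m_N^+ +2/N$, and since $\|\tilde m_N^+-\tilde m_N^-\|_\infty\to 0$ in probability the conclusion follows by a Wasserstein-distance (hence $L^1$) argument in the uniform topology.

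The practical difference is exactly at the point you flag as the ``main obstacle''. Your Step~2 asserts that the $N$ growth-step durations are, up to lower-order error, i.i.d.\ exponentials; but they are not independent, since each rate depends on the dissipated field at the current boundary. To turn this into a rigorous uniform-in-time statement you would need to control all partial sums $\tau_k$, not just $\tau_{N-1}=T_c$ as in Theorem~\ref{th:covering}. The paper's sandwich construction does precisely this in one stroke: by freezing the boundary fields at their extreme values over $[0,e^{-d\beta}]$ it manufactures genuinely i.i.d.\ exponential steps on both sides, and the monotone coupling transports the Poisson-process functional limit to $\tilde m_N$ uniformly. Your continuous-mapping route is perfectly viable, but to close it cleanly you should import exactly this constant-rate sandwich rather than invoking a Glivenko--Cantelli estimate for variables that are only approximately independent.
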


\subsection{Smoothly varying initial condition.}

We have seen that if the initial local fields are nearly constant then the system evolves by periodic droplet formations and expansions; moreover 
when the droplet expansion terminates, the local fields are nearly constant with absolute value given by 
\[
4 \b - \ln N + \ln \ln N + \a + X_i,
\]
up to corrections that vanish in the limit as $N \ra +\infty$, where the $X_i$'s are random variables, independent for different iterations, with distribution
\[
P(X_i > x) = \exp\left(-e^{x}\right).
\]
We show next that these nearly constant profiles for the local fields are stable under perturbations that are sufficiently small and regular.
More specifically, assume 
\begin{equation} \label{varying}
\l_i(0) = \l_{N,\b} \Phi\left(\frac{i}{N}\right) + o(i,N,\b),
\end{equation}
where $o(i,N,\b)$ is as in \eqref{nearlyc}, $\Phi:[0,1] \ra (0,+\infty)$ is a $\mathcal{C}^2$ function, with a unique minimum $x_* \in (0,1)$, and $\Phi''(x_*)>0$ and a unique maximum $x^*$, with $\Phi''(x^*) <0$. With no loss of generality, 
%possibly rescaling $\l_{N,\b}$ 
we assume that $\Phi(x_*) = 1$. In the following theorem we also assume the usual initial condition $\s_i(0) \equiv -1$.

\begin{theorem} \label{th:varying}
Assume that $\Phi$ takes its values in $[1,2]$, $\lim \frac{\l_{N,\b}}{\ln N}\leq -1$ and $\lim\frac{\ln N}{\b} = c \in [0,1[$. 
\begin{itemize}
\item[(i)]
Denoting by $T_{1,1}$ the time of the first spin flip,
\begin{equation} \label{T1var}
\a \ln N \left( T_{1,1} 
- \frac{1}{\a} \ln \left( - \frac{\l_{N,\b}}{\ln N} \right) 
- \frac{3}{2\a} \frac{\ln \ln N}{\ln N} 
- \frac{1}{\a \ln N} \ln \left( \a \sqrt{\frac{\Phi''(x_*)}{2 \pi}} \right) 
\right)
\end{equation}
converges in distribution to a random variable $X$ as in \eqref{X}.

\item[(ii)]
Using the notations introduced in Theorem \ref{thm:many_stopping}, let $T_{1,1} + T_{c,1}$ be the first time all spins become $-1$. Then $T_{c,1}$ converges to zero as $N \ra +\infty$ in probability, and
\[
\l_i(T_{1,1} + T_{c,1}) = 4 \b - \Phi\left(\frac{i}{N}\right) \left[ \ln N - \frac32 \ln \ln N - \ln \left( \a \sqrt{\frac{\Phi''(x_*)}{2 \pi \Phi(x_*)}} \right) - X_{N,1} \right] + o(i,N,\b),
\]
where $X_{N,1}$ converges in distribution to a random variable $X$ as in \eqref{X}.

\item[(iii)]
After the $j$-th droplet expansion, $j \geq 2$, the local fields are given, up to the sign, by
\begin{multline*}
\left|\l_i\left(\sum_{k=1}^j (T_{1,k} + T_{c,k}) \right)\right| \\ =
4 \b - \left(R^{j-1}\Phi\right)\left(\frac{i}{N}\right) \left[ \ln N - \frac32 \ln \ln N - \ln \left( \a \sqrt{\frac{(R^{j-1}\Phi)''(x^*)}{2 \pi (R^{k-1}\Phi)(x^*)}} \right) - X_{N,j} \right] + o(i,N,\b), 
\end{multline*}
where the sequence $(X_{N,j})_{j \geq 1}$ converges to an i.i.d sequence, and $R^{j-1}$ denotes the $j-1$-st iteration of the map
\[
R \phi (x) = \frac{4 -c \Phi(x)}{4-c \Phi(x^*)}.
\]

\end{itemize}

\end{theorem}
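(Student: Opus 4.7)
\emph{Part (i).} The plan is to follow the strategy of Theorem~\ref{th:firstspinflip}, with Laplace's method substituting for the trivial sum that appears there. Until the first flip the local fields decay deterministically, so the intensity of the Poisson process counting $T_{1,1}$ is
\[
\mu_N(t)\,=\,\sum_{i=1}^N \exp\bigl(-|\lambda_{N,\beta}|\,\Phi(i/N)\,e^{-\alpha t}\bigr)\,(1+o(1)),
\]
the $(1+o(1))$ absorbing the perturbation $o(i,N,\beta)$. A Riemann-sum approximation together with Laplace's method around the non-degenerate minimum $x_*$ of $\Phi$ (using $\Phi(x_*)=1$) will give
\[
\mu_N(t)\,\sim\,N\sqrt{\frac{2\pi}{\Phi''(x_*)\,|\lambda_{N,\beta}|\,e^{-\alpha t}}}\;e^{-|\lambda_{N,\beta}|e^{-\alpha t}}\,.
\]
Substituting $u=|\lambda_{N,\beta}|e^{-\alpha s}$ in $M_N(t)=\int_0^t\mu_N(s)\,ds$ and using the classical asymptotic $\int_y^\infty u^{-3/2}e^{-u}\,du\sim y^{-3/2}e^{-y}$, the time-change identity $M_N(T_{1,1})=E_1$ (with $E_1$ a unit exponential) will yield, after solving for $T_{1,1}$ and recalling $P(\ln E_1>x)=e^{-e^x}$, the announced scaling \eqref{T1var}. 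Note that the extra factor $u^{-1/2}$ in the integrand, absent in the constant-$\Phi$ case, is exactly what produces the coefficient $3/2$ in front of $\ln\ln N$.

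\emph{Part (ii).} The droplet-expansion analysis used in Theorem~\ref{th:covering} and described in Remark~\ref{rem:droplet} does not exploit constancy of the initial profile and applies verbatim: with probability tending to one, after the first flip a single droplet of $+1$'s grows at nearly constant speed until it covers $\Lambda_N$ in a time $T_{c,1}\asymp N^2 e^{-2\beta}/(\alpha\ln N)$, which vanishes since $c<1$. During this expansion, each site sees its two neighbors flip exactly once from $-1$ to $+1$, contributing $+4\beta$ to $\lambda_i$, and the continuous decay over $[T_{1,1},T_{1,1}+T_{c,1}]$ is negligible because $\alpha T_{c,1}\to 0$. Combined with $\lambda_i(T_{1,1}^-)=-|\lambda_{N,\beta}|\Phi(i/N)e^{-\alpha T_{1,1}}+o(i,N,\beta)$ and the value of $|\lambda_{N,\beta}|e^{-\alpha T_{1,1}}$ from part (i), this will give the announced expression for $\lambda_i(T_{1,1}+T_{c,1})$, which is positive and of order $\beta$ in the regime $c<1$.

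\emph{Part (iii).} I argue by induction on $j\geq 1$, the base case being parts (i)-(ii). Assume that at time $t_j:=\sum_{k=1}^j(T_{1,k}+T_{c,k})$ one has $\sigma_i(t_j)=(-1)^{j+1}$, $\lambda_i(t_j)$ has the same sign as $\sigma_i(t_j)$, and
\[
|\lambda_i(t_j)|\,=\,4\beta-(R^{j-1}\Phi)(i/N)\,A_j+o(i,N,\beta),
\]
with $A_j=\ln N+O(\ln\ln N)$. Using $A_j/\beta\to c<1$, one gets uniformly in $i$
\[
\frac{|\lambda_i(t_j)|}{4\beta-(R^{j-1}\Phi)(\hat x_j)\,A_j}\,\longrightarrow\,\frac{4-c\,(R^{j-1}\Phi)(i/N)}{4-c\,(R^{j-1}\Phi)(\hat x_j)}=(R^j\Phi)(i/N),
\]
where $\hat x_j$ is the maximizer of $R^{j-1}\Phi$. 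This identifies the normalized initial profile of round $j+1$ as $R^j\Phi$ and its minimizer as $\hat x_j$; applying parts (i)-(ii) to round $j+1$, after the relabeling $\sigma\mapsto-\sigma$, $\lambda\mapsto-\lambda$, produces the formula at step $j+1$. The strong Markov property at the stopping times $t_j$ ensures that the unit exponentials $E_j$ arising from successive time-changes are independent, so the sequence $(X_{N,j})_{j\geq 1}$ converges to an i.i.d.\ Gumbel sequence.

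\emph{Main obstacle.} The hardest step will be making the Laplace asymptotic of $\mu_N(t)$ uniform in $t$ over the critical window, so that the Gaussian approximation error is negligible compared with $M_N(T_{1,1})$. I would handle this by splitting the sum according to whether $i/N$ lies within a neighborhood of $x_*$ of radius $(\ln N)^{-1/2+\delta}$: inside, the second-order Taylor expansion and the non-degeneracy $\Phi''(x_*)>0$ suffice, while outside the strict inequality $\Phi>1$ makes each term super-polynomially small. For the iteration in part (iii), one must also verify that the error terms $o(i,N,\beta)$ keep their ``nearly constant'' character through successive rounds, which follows from the same concentration properties of the droplet expansion already used in the proof of Theorem~\ref{th:covering}.
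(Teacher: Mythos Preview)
Your plan for parts~(i) and~(ii) is correct and matches the paper's argument, with one harmless reordering: you apply Laplace's method to the spatial sum $\mu_N(t)$ first and then integrate in time, whereas the paper integrates each site's rate first (via the integration-by-parts expansion $I(\gamma,t)\sim F(\gamma,t)$) and then applies a discrete Laplace expansion to the resulting sum $S_{N,\gamma}(t)$. Both routes lead to the same $N\gamma^{-3/2}e^{-\gamma}$ asymptotic, and your observation that the extra $u^{-1/2}$ produces the coefficient $3/2$ is exactly the point.

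Part~(iii), however, has a genuine gap. After the first round the local fields are of the \emph{two-term} form
\[
|\lambda_i(t_1)|\,=\,4\beta-\Phi(i/N)\,A_1+o(i,N,\beta),\qquad A_1=\ln N+O(\ln\ln N),
\]
and you try to reduce this to the one-term hypothesis of part~(i) by writing $|\lambda_i(t_1)|=\lambda'_{N,\beta}\,\Psi(i/N)$ with $\lambda'_{N,\beta}=4\beta-\Phi(\hat x_1)A_1$ and $\Psi=R\Phi$. But the exact normalized profile is $\Psi_{N,\beta}(x)=(4\beta-\Phi(x)A_1)/(4\beta-\Phi(\hat x_1)A_1)$, and $\Psi_{N,\beta}-R\Phi$ is only $O(|A_1/\beta-c|)=o(1)$, with no control on the rate since the hypothesis is merely $\ln N/\beta\to c$. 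Multiplying by $\lambda'_{N,\beta}\asymp\beta$, the discrepancy $\lambda'_{N,\beta}(\Psi_{N,\beta}-R\Phi)$ is \emph{not} $o(i,N,\beta)$, so the initial condition for round~$2$ does not satisfy the hypothesis~\eqref{varying} and parts~(i)--(ii) cannot be invoked as a black box.

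The paper's fix is not to force the one-term form but to redo the Laplace computation directly for the two-term structure $-4\beta+\lambda_{N,\beta}\Phi(i/N)$: the constant $-4\beta$ factors out of the exponent as $e^{-4\beta e^{-\alpha t}}$, while the remaining sum is driven by the \emph{maximizer} $x^*$ of $\Phi$ (since now $\lambda_{N,\beta}>0$). Computing $\lambda_i(T_1^-)$ and adding $4\beta$ after the droplet expansion yields exactly $4\beta-(R\Phi)(i/N)\,A_2+o(i,N,\beta)$, so the two-term form is preserved with $\Phi\mapsto R\Phi$, and \emph{this} is the correct induction. Your limit $\frac{|\lambda_i(t_j)|}{4\beta-(R^{j-1}\Phi)(\hat x_j)A_j}\to(R^j\Phi)(i/N)$ is the right heuristic for why $R$ appears, but it is the output of the computation, not a substitute for it.
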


Noting that for each $x \in [0,1]$ we have $R^j \Phi(x) \ra 1$ as $j \ra +\infty$, this last result shows that nearly constant profiles are attracting.

\section{Proofs} \label{sect:proofs}

\subsection{First spin flip: proof of Theorem \ref{th:firstspinflip}}  \label{section:firstspinflip}

By the definition, the time $T_1$ is the minimum of $N$ independent variables,
whose distributions are time--inhomogeneous exponential laws, whence 
\begin{equation}
\label{dtf}
\forall t\geq 0\qquad
P(T_1>t)\,=\,\exp\Bigg(-\sum_{1\leq i\leq N}
%\int_0^t\exp\big(\lambda(0,i)e^{-\alpha s}\big)\,ds\Bigg)\,.
	\int_0^t\exp\big(\lambda_{N,\beta}(i)e^{-\alpha s}\big)\,ds\Bigg)\,.
\end{equation}

We begin by performing the asymptotic expansion of the integral inside the exponential.
More precisely, let us define
$$
I(\gamma,t)\,=\,
\int_0^t\exp\big(-\gamma e^{-\alpha s}\big)\,ds\,.$$
Our first goal is to expand this integral in the limit as $\gamma$ goes to $+\infty$. 

A natural technique would be to
use Laplace's method of expansion. However, this integral is simple enough to be handled conveniently through an integration
by parts, as follows. We write
\begin{multline}
\label{ipp}
I(\gamma,t)\,=\,
\int_0^t
\Big(
\big(\gamma\alpha e^{-\alpha s}\big)
\exp\big(-\gamma e^{-\alpha s}\big)\Big)
\frac{1}{\gamma\alpha e^{-\alpha s}}
\,ds\cr
%\,=\,
%\int_0^t
%\Big(
%\big(\gamma\alpha e^{-\alpha s}\big)
%\exp\big(-\gamma e^{-\alpha s}\big)\Big)
%\frac{1}{\gamma\alpha e^{-\alpha s}}
%\,ds\cr
\,=\,
	\Bigg[
\exp\big(-\gamma e^{-\alpha s}\big)
\frac{1}{\gamma\alpha e^{-\alpha s}}
	\Bigg]_0^t
	-
\int_0^t
\exp\big(-\gamma e^{-\alpha s}\big)
\frac{1}{\gamma e^{-\alpha s}}
\,ds
\cr
\,=\,
\frac{ \exp\big(-\gamma e^{-\alpha t}\big)
}{\gamma\alpha e^{-\alpha t}}
-
\frac{ \exp\big(-\gamma \big)
}{\gamma\alpha }
	-
\frac{1}{\gamma }
\int_0^t
\exp\big(\alpha s-\gamma e^{-\alpha s}\big)
\,ds
	\,.
\end{multline}
Let us define
$$\forall \gamma\geq 0\quad\forall t\geq 0\qquad
F(\gamma,t)\,=\,
\frac{ \exp\big(-\gamma e^{-\alpha t}\big)
}{\gamma\alpha e^{-\alpha t}}
\,.$$
The function $F$ is the principal part of the asymptotic expansion of $I$ in the regime
where $\gamma$ tends to $+\infty$ and $t$ stays bounded.
In fact, from~\eqref{ipp}, we have on one hand
\begin{equation}
	\label{h1}
I(\gamma,t)\,\leq\,
F(\gamma,t)\,,
\end{equation}
and on the other hand
\begin{equation}
	\label{h2}
I(\gamma,t)\,\geq\,
F(\gamma,t)
-
	\frac{ e^{-\gamma } }{\gamma\alpha }
%F(\gamma,0)
%\frac{ \exp\big(-\gamma \big)
%}{\gamma\alpha }
	-
	\frac{ e^{\alpha t }
}{\gamma }
%\int_0^t
%\exp\big(-\gamma e^{-\alpha s}\big)
%\,ds
	I(\gamma,t)
	\,.
\end{equation}
Inequalities~\eqref{h1} and~\eqref{h2} yield that
\begin{align}
	\label{hh}
	0\, \leq\,
F(\gamma,t)
-
I(\gamma,t)
	&\, \leq\,
%F(\gamma,0)
	\frac{ e^{-\gamma } }{\gamma\alpha }
	+
	\frac{ e^{\alpha t }
}{\gamma }
%\int_0^t
%\exp\big(-\gamma e^{-\alpha s}\big)
%\,ds
	I(\gamma,t)
	\cr
	&\, \leq\,
%F(\gamma,0)
	\frac{ e^{-\gamma } }{\gamma\alpha }
	+
	\frac{ e^{\alpha t }
}{\gamma }
	F(\gamma,t)
%\int_0^t
%\exp\big(-\gamma e^{-\alpha s}\big)
%\,ds
	\,,
\end{align}
whence
\begin{equation}
	\label{ex}
I(\gamma,t)
\,=\,
F(\gamma,t)
	+O\Big(
	\frac{ e^{-\gamma } }{\gamma\alpha }
	+
	\frac{ e^{\alpha t } }{\gamma } F(\gamma,t)
	\Big)
	\,.
\end{equation}
We now give estimates for $P(T_1 > t)$ using \eqref{dtf} and the inequalities above. We begin with the case in which
\[
\lim \frac{\l_{N,\b}}{\ln N} < -1.
\]
{
%\color{red}
By \eqref{ex} we obtain, taking into account that $- \l_{N,\b} > \ln N$ for $N$ large and using \eqref{inhom_lambdas}:
\begin{multline}
	\label{coex}
	\sum_{1\leq i\leq N}
I( -\lambda_{N,\beta}(i) ,t)
\cr
	\,=\,\sum_{1\leq i\leq N}
F(-\lambda_{N,\beta}(i),t)
	+
	O\Big(
	\frac{ Ne^{ \l_{N,\b} } }{- \l_{N,\b}\alpha }
	+
	\frac{ e^{\alpha t } }{- \l_{N,\b} } 
	\sum_{1\leq i\leq N}
F(-\lambda_{N,\beta}(i),t)
	\Big)
\cr
	\,=\,
	\Big(
	\sum_{1\leq i\leq N}
F(-\lambda_{N,\beta}(i),t)
	\Big)
	\Big(1+O\Big(\frac{ e^{\alpha t } }{-\l_{N,\b} } \Big)\Big)
	+
	O\Big(
	\frac{ 1 }{\alpha  \ln N}
	\Big)\,.
\end{multline}
Using \eqref{inhom_lambdas}, we see that 
\[
F(-\lambda_{N,\beta}(i),t) = F(-\lambda_{N,\beta},t)(1+o(1)),
\]
where the term $o(1)$ can be chosen not dependent on $i$,
so
\begin{equation}
\label{T1est}
P(T_1>t) = \exp\left[ -N \frac{\exp\left(\l_{N,\b} e^{-\a t} \right)}{- \l_{N,\b} \a e^{-\a t}} \left( 1  + o(1) + O\left( \frac{e^{\a t}}{-\l_{N,\b}} \right) \right) + O\left(\frac{ 1 }{\alpha  \ln N} \right) \right].
\end{equation}
We now choose $t = t(N)$ so that $P(T_1>t)$ has a finite nonzero limit as $N \ra +\infty$. We set
\begin{equation}
\label{tchoice}
t = \frac{1}{\a} \ln \left(\frac{-\l_{N,\b}}{\ln N}\right) + u
\end{equation}
for some $u = u(N)$ that goes to zero as $N \ra +\infty$. Note that, with this choice,
\[
O\left( \frac{e^{\a t}}{-\l_{N,\b}} \right) = o(1).
\]
Inserting \eqref{tchoice} in \eqref{T1est} and using $e^{-\a u} = 1-\a u + o(1)$, we get
\begin{equation}
\label{wzzztf}
%\forall t\geq 0\qquad
	P\Big(T_1>
	\frac{1}{\alpha}\ln\Big(\frac{ -\lambda_{N,\beta} }{\ln N}\Big)
	+u
	\Big)\,=
	%\,\cr
	%\,P\big(\big|Y_{N,\beta}\big|>\ln\beta\big)\,+\,\cr
	\exp\Bigg(-
	%{\displaystyle \exp\Big( Y_{N,\beta} e^{-\alpha t } \Big)}
	\frac{\exp\big({ \alpha u\ln N+o(\ln N) }\big)}
	{ \alpha  {\ln N}	 }
	%{\displaystyle e^{Y_{N,\beta} e^{-\alpha t } }}
		 %\Big(1+O\Big(\frac{Y_{n,\beta}}{\beta}\Big)+ o({1}) \Big)
		 \big(1+ o({1}) \big) +  O\left(\frac{ 1 }{\alpha  \ln N} \right)
	%\Big(1+ O\Big( \frac{ e^{\alpha t } }{\beta } \Big) \Big)
	%\Big(1+O\Big(\frac{ e^{\alpha t } }{\beta } \Big)\Big)
	 %+ O\Big( \frac{N e^{-\beta } }{\beta\alpha }\Big)
	 \Bigg)\,.
\end{equation}
}
Now we choose $u$ so that 
\[
	\frac{\exp\big({ \alpha u\ln N+o(\ln N) }\big)}
	{ \alpha  {\ln N}	 }
\]
is bounded away form zero and infinity. Thus we take
\begin{equation}
\label{want}
	u\,=\,
%	\frac{1}{\alpha}
	\frac{\ln\ln N}{\alpha\ln N}
	+
	\frac{v}{\alpha\ln N}
	\,,
\end{equation}
for $v \in \R$.
Replacing $u$ with this expression   we get
\begin{equation}
\label{wzzzztf}
%\forall t\geq 0\qquad
	P\Big(T_1>
	\frac{1}{\alpha}\ln\Big(\frac{ -\lambda_{N,\beta} }{\ln N}\Big)
	+
	\frac{\ln\ln N}{\alpha\ln N}
	+
	\frac{v}{\alpha\ln N}
	\Big)\,=\,
	%P\big(\big|Y_{N,\beta}\big|>\ln\beta\big)\,+\,
	\exp\Big(-
	%{\displaystyle \exp\Big( Y_{N,\beta} e^{-\alpha t } \Big)}
	\displaystyle 
	\frac{1}{ \alpha }
	%\sqrt{\frac{2\pi\phi(x^*)}{ { \phi''(x^*) }}}
	\exp\big(v+
	%Y_{N,\beta} \Big(\frac{\ln N}{ -\lambda_{N,\beta} }\Big)
		  o({1}) \big)
	 \Big)\,.
\end{equation}
which completes the proof for the case 
\[
\lim \frac{\l_{N,\b}}{\ln N} < -1.
\]
Note that, setting $c := -\lim \frac{\l_{N,\b}}{\ln N}$, we have seen, in particular, that for $c>1$, 
\[
T_1 \ra \frac{1}{\a} \ln c
\]
in probability. Using the fact that, for each $t>0$, $P(T_1>t)$ is decreasing in $\l_{N,\b}$, by comparison it follows that $T_1 \ra 0$ in probability whenever $c\leq 1$.

\subsection{Covering time: proof of Theorem \ref{th:covering}} \label{section:covertime}

Before going into the details of the proof, we give an intuition of what happens during the covering.
Let $\overline{i} \in \{1,2,\ldots,N\}$ be such that 
\[
\tilde{\s}_i(0) = \left\{ \begin{array}{ll} -1 & \mbox{for } i \neq \overline{i} \\ 1 & \mbox{for } i = \overline{i}. \end{array} \right.
\]
The local field profile is given by
\begin{equation}\label{lambdainit}
\tilde{\l}_i(0) = \left\{ \begin{array}{ll}  2 \b + \l_i(T_1^{-}) & \mbox{for } i = \overline{i} \pm 1 \\ \l_i(T_1^{-}) & \mbox{otherwise}. \end{array} \right.
\end{equation}
where, by \eqref{lambdaT1bis}, 
\begin{equation}\label{past}
	\l_i(T_1^{-}) =  - \ln N + \ln \ln N + \ln \a + X_{N,\beta} + o(i,N,\beta).
\end{equation}
Note that the spins at $ \overline{i} \pm 1$ are likely to flip first, as, by \eqref{condbeta}, $2 \b \gg -\l_i(T_1^{-})$ with very high probability. Suppose that the first spin flip occurs at time $\tau_1$ for the spin $\overline{i} +1$. We have:
\begin{equation}\label{tau1}
\tilde{\l}_i(\tau_1) = \left\{ \begin{array}{ll}  
 \left[ 2 \b + \l_i(T_1^{-}) \right] e^{-\a \tau_1} & \mbox{for } i = \overline{i} \pm 1 \\  
 \l_i(T_1^{-}) e^{-\a \tau_1} + 2\b & \mbox{for } i = \overline{i}, \overline{i} +2 \\ \l_i(T_1^{-})e^{-\a \tau_1} & \mbox{otherwise}. \end{array} \right.
\end{equation}
In terms of the spin-flip rates $\tilde{r}_i(t) = \exp[-\tilde{\s}_i(t)\tilde{\l}_i(t)]$, note that $\tilde{r}_i(\tau_1) \leq 1$ with high probability for $i \neq \overline{i}-1, \overline{i} +2$, while 
\[
 \tilde{r}_{\overline{i}-1}(\tau_1) = \exp\left[ \left( 2 \b + \l_{\overline{i}-1}(T_1^{-}) \right) e^{-\a \tau_1} \right]
 \]
and
\[
 \tilde{r}_{\overline{i}+2}(t) = \exp\left[  \l_{\overline{i}+2}(T_1^{-}) e^{-\a \tau_1} + 2\b \right]\,
 \]
 are much larger than~$1$.
 It follows that the spins at $\overline{i}-1$ and $\overline{i} +2$ are likely to flip before the others. To have a better understanding, assume the spin at $\overline{i}-1$ flips first, at time $\tau_2$. The local field profile at time $\tau_2$ is then
 \begin{equation}\label{tau2}
 \tilde{\l}_i(\tau_2) = \left\{   \begin{array}{ll} \left[\l_i(T_1^{-}) e^{-\a \tau_1} + 2\b\right] e^{-\a(\tau_2 - \tau_1)} + 2 \b & \mbox{for } i = \overline{i} \\
  \left[ 2 \b + \l_i(T_1^{-}) \right] e^{-\a \tau_2} & \mbox{for } i = \overline{i}-1 \\
   \l_i(T_1^{-})e^{-\a \tau_2} + 2 \b & \mbox{for } i = \overline{i}-2 \\
  \left[ 2 \b + \l_i(T_1^{-}) \right]e^{-\a \tau_2} & \mbox{for } i = \overline{i}+1 \\
   \left[\l_i(T_1^{-}) e^{-\a \tau_1} + 2\b\right] e^{-\a(\tau_2 - \tau_1)} & \mbox{for } i = \overline{i}+2 \\
  \l_i(T_1^{-}) e^{-\a \tau_2} & \mbox{otherwise}.
    \end{array} \right.
\end{equation}
Again, we see that the spins at $\overline{i}\pm2$ are likely to flip first.
Thus, with high probability, as we will see in details next, a droplet of consecutive $+1$ spins forms. Denote by $\tau_n$ 
the time at which a droplet of length $n+1$ is formed, with $1 \leq n \leq N-3$. At time $\tau_n$ the local field in the {\em interior} 
of the droplet is bounded from below by $4 \b e^{-\a \tau_n} + \l_i(T_1^{-})$. In the internal boundary of the droplet the local field 
is bounded from below by $2 \b e^{-\a \tau_n} + \l_i(T_1^{-})$. In the external boundary of the droplet the local field satisfies
\begin{equation}\label{localtaun1}
\tilde{\l}_i(\tau_n) \in \left[2 \b + \l_i(T_1^{-}), 2 \b + \l_i(T_1^{-})e^{-\a \tau_n}\right]
\end{equation}
if $i$
 is the site neighbor of the last spin flipped, and 
\begin{equation}\label{localtaun2}
\tilde{\l}_i(\tau_n) \in \left[2 \b e^{-\a \tau_n} + \l_i(T_1^{-}), 2 \b + \l_i(T_1^{-})e^{-\a \tau_n}\right]
\end{equation}
for the other site. Note that the extremities of these intervals may be reversed in the case $\l_i(T_1^{-})>0$, which is unlikely (see \eqref{past}).
For all other sites the local field equals $\l_i(T_1^{-}) e^{-\a \tau_n}$. For $n = N-2$ the situation is slightly different, since there is only one site in the external boundary of the droplet. Denoting this site by $i^*$, we have
\[
	\tilde{\l}_{i^*}(\tau_{N-2}) \in \left[4 \b e^{-\a \tau_{N-2}} + \l_{i^*}(T_1^{-}), 4 \b + \l_{i^*}(T_1^{-})e^{-\a \tau_{N-2}}\right].
\]
This gives the intuition on how the local fields change according to the growth of the droplet of $+1$ spins. 
Notice that, with very large probability, the covering is performed (excluding the last flip) with a sequence of steps occurring with a rate of order
\begin{equation*}
 2 e^{2\b -\ln N +\ln \ln N + \ln \a + X_N + o(1)}={2\a \ln N \over N}     e^{2\b+X_N+o(1)}
\end{equation*}
and this is the intuitive reason to choose the time scaling
\begin{equation}
\label{ch_three_ratescovering} N (2 e^{2\b -\ln N +\ln \ln N + \ln \a + X_N })^{-1}=  {N^2 \over 2\a\ln N}e^{-2\b-X_N}
\end{equation}
appearing in \eqref{randscal}.\\

The strategy of the proof is to show that, in the limit, during the covering process only spins adjacent to the droplet will flip, 
and then to show that \eqref{ch_three_ratescovering} gives the correct time-scaling for the process where all undesired flips are suppressed.\\

{\bf Step 1: Probability of observing an undesired flip}\\ 
Let $\bar{\tau}$ be the time at which an ``undesired" flip occurs, i.e. the time at which we observe a flip of one of the spins that are {\em not} adjacent to the droplet. Our aim is to show that $P(\bar{\tau}\leq \tau_{N-1})$ converges to zero as $\beta,N\uparrow+\infty$. We estimate this probability conditioned to the event 
\[
A_N := \{- \ln N \leq \l_i(T_1^{-}) \leq - \ln N + 2 \ln\ln N: \, i=1, \ldots, N\},
\]
whose probability tends to one.  \\
Notice that, under $A_N$,   for $t\in[0,\tau_1[$, we have one positive spin with flipping rate at most
 \[
e^{\ln N},
\]
$N-3$ negative spins whose rates are at most
\[
e^{[-\ln N+2\ln\ln N]e^{-\alpha t}},
\]
and two negative spins, adjacent to the droplet of $+1$ spins, whose rates are at least 
 \[
 e^{[2\beta-\ln N]e^{-\alpha t}}.
 \]
Then, $P(\bar{\tau}\leq \tau_1|T_1){\bf 1}_{A_N}$ is bounded  by the probability that the first point of a Poisson process of time-dependent intensity 
\[
I_1(t):=  e^{\ln N }+(N-3)e^{[-\ln N+2\ln\ln N]e^{-\alpha t}}
\]
occurs before the first point of a point process with time-dependent intensity 
\[
J(t):= e^{\left[2 \b - \ln N\right]e^{-\a t}},
\]
where the two processes are independent.\\
%Denoting these first points by $X_1$ and $Y_1$ respectively, we conclude that 
%\be{undesired1}
%P(\bar{\tau}\leq \tau_1|T_1){\bf 1}_{A_N}\leq P(X_1<Y_1).
%\ee
Then, under $A_N\cap(\bar{\tau}>\tau_1)$,   for $t\in[\tau_1,\tau_2[$, we have a droplet consisting of two positive spins whose rates are at most
\[
e^{[-2\beta+\ln N]e^{-\alpha t}},
\]
$N-4$ negative spins whose rates are at most
\[
e^{[-\ln N+2\ln\ln N]e^{-\alpha t}},
\]
and two negative spins, adjacent to the droplet of $+1$ spins, whose rates are at least 
 \[
  e^{[2\beta-\ln N]e^{-\alpha t}}.
 \]
So, $P(\bar{\tau}\in] \tau_1,\tau_2]|T_1,(\bar{\tau}>\tau_1)){\bf 1}_{A_N}$ is bounded  by the probability that the first point of a Poisson process of time-dependent intensity 
\[
I_2(t):=2e^{[-2\beta+\ln N]e^{-\alpha t}}+(N-4)e^{[-\ln N+2\ln\ln N]e^{-\alpha t}}
\]
occurs before the first point of a point process with time-dependent intensity $J(t)$, 
where the two processes are independent.\\
Moreover, for any $k=3,\dots,N-2$, under $A_N\cap(\bar{\tau}>\tau_{k-1})$,   for $t\in[\tau_{k-1},\tau_k[$, we have a droplet consisting of $k$ positive spins: two of them with rates at most
\[
e^{[-2\beta+\ln N]e^{-\alpha t}},
\]
and $k-2$ of them with rates at most
\[
e^{[-4\beta+\ln N]e^{-\alpha t}}.
\]
The system will also present $N-k-2$ negative spins whose rates are at most
\[
e^{[-\ln N+2\ln\ln N]e^{-\alpha t}},
\]
and two negative spins, adjacent to the droplet of $+1$ spins, whose rates are at least 
 \[
  e^{[2\beta-\ln N]e^{-\alpha t}}.
 \]
Then, for any $k=3,\dots,N-2$, $P(\bar{\tau}\in] \tau_{k-1},\tau_{k}]|T_1,(\bar{\tau}>\tau_{k-1})){\bf 1}_{A_N}$ is bounded  by the probability that the first point of a Poisson process of time-dependent intensity 
\[
I_k(t):=2 e^{[-2\beta+\ln N]e^{-\alpha t}}+(k-2) e^{[-4\beta+\ln N]e^{-\alpha t}}+(N-k-2)e^{[-\ln N+2\ln\ln N]e^{-\alpha t}}
\]
occurs before the first point  of a point process with time-dependent intensity 
 $J(t)$, 
where the two processes are independent.\\
Finally, under $A_N\cap(\bar{\tau}>\tau_{N-2})$,   for $t\in[\tau_{N-2},\tau_{N-1}[$, we have a droplet comprised by $N-1$ positive spins: two of them with rates at most
\[
e^{[-2\beta+\ln N]e^{-\alpha t}},
\]
and $N-3$ of them with rates at most
\[
e^{[-4\beta+\ln N]e^{-\alpha t}}.
\]
The system also presents one negative spin, with rate at least
\[
  e^{[4\beta-\ln N]e^{-\alpha t}}.
\]
Then, $P(\bar{\tau}\in] \tau_{N-2},\tau_{N-1}]|T_1,(\bar{\tau}>\tau_{N-2})){\bf 1}_{A_N}$ is bounded  by the probability that the first point of a Poisson process of time-dependent intensity 
\[
I_{N-1}(t):=2 e^{[-2\beta+\ln N]e^{-\alpha t}}+(N-3)e^{[-4\beta+\ln N]e^{-\alpha t}}
\]
occurs before the first point of a point process with time-dependent intensity 
 $J(t)$,
where the two processes are independent.\\
By the analysis above, we can consider a family of Poisson processes $\{ \zeta_k\}_{k=1}^{N-1}$ with time-dependent intensities $\{I_k(t)\}_{k=1}^{N-1}$ and a Poisson process $\eta$ with time-dependent intensity $J(t)$. We also assume that, for any $k=1,\dots,N-1$, the process $\zeta_k$ and the process $\eta$ are independent. Let us denote with $X_k$ the first point of the process $\zeta_k$ for any $k=1,\dots, N-1$, and with $Y$ the first point of the process $\eta$. In this way, we deduce that
\begin{align*}
P(\bar{\tau}\leq \tau_{N-1}|T_1){\bf 1}_{A_N}&=P(\bar{\tau}\leq \tau_1|T_1){\bf 1}_{A_N}+\sum_{k=2}^{N-1} P(\bar{\tau}\in]\tau_{k-1},\tau_{k}]|T_1){\bf 1}_{A_N}\\
&\leq P(\bar{\tau}\leq \tau_1|T_1){\bf 1}_{A_N}+\sum_{k=2}^{N-1} P(\bar{\tau}\leq\tau_{k}|T_1,\bar{\tau}>\tau_{k-1}){\bf 1}_{A_N}\\
&\leq \sum_{k=1}^{N-1} P(X_k<Y).
\end{align*}
Let us fix $
T_m= e^{-d\beta}$ with $d$ a positive constant such that $2c+d<2$. We have that, for any $k=1,\dots,N-1$,
{
%\color{red}
\begin{align*}
P(X_k<Y)&\leq P(X_k<Y\leq T_m)+P(Y>T_m)\\
&\leq   \frac{ P(X_k<Y\leq T_m) }{ P(X_k\leq T_m, Y\leq T_m)} +P(Y>T_m)\\
&= P(X_k<Y\leq T_m) | X_k\leq T_m, Y\leq T_m)+P(Y>T_m)\\
&\leq  P(X_k<Y  | X_k\leq T_m, Y\leq T_m) +P(Y>T_m)
\end{align*}
}
where we used the independence of $X_k$ and $Y$, hence
\begin{align*}
P(\bar{\tau}\leq \tau_{N-1}|T_1){\bf 1}_{A_N}&\leq \sum_{k=1}^{N-1} P(X_k<Y|Y\leq T_m,X_k\leq T_m) +(N-1) P(Y>T_m).
\end{align*}
Notice that, conditioned to $(X_k\leq T_m,Y\leq T_m)$ the distribution of $X_k$ is stochastically bigger than the one of an exponential random variable of parameter $I_k(T_m)$ and the distribution of $Y$ is stochastically smaller than the one of an exponential r.v. of parameter $J(T_m)$. This means that, for any $k=1,\dots,N-1$, 
\[
 P(X_k<Y|Y\leq T_m,X_k\leq T_m)\leq {I_k(T_m)\over I_k(T_m)+J(T_m)},
\]
so we get 
\begin{equation}
\label{maggiorazione} P(\bar{\tau}\leq \tau_{N-1}|T_1){\bf 1}_{A_N}\leq \sum_{k=1}^{N-1} {I_k(T_m)\over I_k(T_m)+J(T_m)} +(N-1) P(Y>T_m).
\end{equation}
Let us consider the second term on the right-hand side of \eqref{maggiorazione}: 
\begin{align*}
(N-1)P(Y>e^{-d\beta})&\leq (N-1)\exp\left[ -\int_0^{e^{-d\beta}} e^{[2\beta-\ln N]e^{-\alpha t}} dt\right]\\
&\leq \exp\left[\ln N - e^{- d\beta}e^{[2\beta-\ln N]e^{-\alpha e^{- d\beta}}}  \right]\\
&\approx\exp\left[\ln N - e^{ (2-d)\beta-\ln N -\a e^{-d\b} [2\b-\ln N] + o(\b e^{-d\b})}  \right]\underset{\beta,N\uparrow+\infty}{\longrightarrow}0,
\end{align*}
thanks to \eqref{condbeta} and the fact that $c+d<2$. \\
Consider now the first term in the right-hand side of \eqref{maggiorazione}. The following limits hold: 
\begin{multline*}
 {I_1(T_m)\over I_1(T_m)+J(T_m)}=  { e^{\ln N}+(N-3)e^{[-\ln N+2\ln\ln N]e^{-\alpha T_m}}\over  e^{\ln N}+(N-3)e^{[-\ln N+2\ln\ln N]e^{-\alpha T_m}}+ e^{\left[2 \b - \ln N\right]e^{-\a T_m}}}\underset{\beta,N\uparrow+\infty}{\longrightarrow}0,
 \end{multline*}
 \begin{multline*}
{I_2(T_m)\over I_2(T_m)+J(T_m)}=\\ {2e^{[-2\beta+\ln N]e^{-\alpha T_m}}+(N-4)e^{[-\ln N+2\ln\ln N]e^{-\alpha T_m}}\over 2e^{[-2\beta+\ln N]e^{-\alpha T_m}}+(N-4)e^{[-\ln N+2\ln\ln N]e^{-\alpha T_m}}+ e^{\left[2 \b - \ln N\right]e^{-\a T_m}}}\underset{\beta,N\uparrow+\infty}{\longrightarrow}0,
\end{multline*}
\begin{multline*}
{I_{N-1}(T_m)\over I_{N-1}(T_m)+J(T_m)}=\\ {2 e^{[-2\beta+\ln N]e^{-\alpha T_m}}+(N-3)e^{[-4\beta+\ln N]e^{-\alpha T_m}}\over 2 e^{[-2\beta+\ln N]e^{-\alpha T_m}}+(N-3)e^{[-4\beta+\ln N]e^{-\alpha T_m}}+ e^{\left[2 \b - \ln N\right]e^{-\a T_m}}}\underset{\beta,N\uparrow+\infty}{\longrightarrow}0.
\end{multline*}
Moreover, it holds that
\begin{align*}
&\sum_{k=3}^{N-2} {I_k(T_m)\over I_k(T_m)+J(T_m)}\\
&=\sum_{k=3}^{N-2} {2 e^{[-2\beta+\ln N]e^{-\alpha T_m}}+(k-2)e^{[-4\beta+\ln N]e^{-\alpha T_m}}+(N-k-2)e^{[-\ln N+2\ln\ln N]e^{-\alpha T_m}}\over I_k(T_m) +J(T_m)}\\
&\leq { N 2 e^{[-2\beta+\ln N]e^{-\alpha T_m}}\over J(T_m)}+ {N^2 e^{[-4\beta+\ln N]e^{-\alpha T_m}}\over J(T_m)}+\sum_{k=3}^{N-2}{(N\!-\!k-2)e^{[-\ln N+2\ln\ln N]e^{-\alpha T_m}}\over I_k(T_m)+J(T_m)},
\end{align*}
where
\[
{N2 e^{[-2\beta+\ln N]e^{-\alpha T_m}}\over J(T_m)}={2 e^{[-2\beta+\ln N]e^{-\alpha T_m}+\ln N}\over e^{\left[2 \b - \ln N\right]e^{-\a T_m}}} \underset{\beta,N\uparrow+\infty}{\longrightarrow}0,
\]
\[
{N^2 e^{[-4\beta+\ln N]e^{-\alpha T_m}}\over J(T_m)}={e^{[-4\beta+\ln N]e^{-\alpha T_m}+2\ln N}\over e^{\left[2 \b - \ln N\right]e^{-\a T_m}} }\underset{\beta,N\uparrow+\infty}{\longrightarrow}0,
\]
while
\begin{align*}
&\sum_{k=3}^{N-2}{(N-k-2)e^{[-\ln N+2\ln\ln N]e^{-\alpha T_m}}\over I_k(T_m)+J(T_m)}\\
&\leq\sum_{k=1}^{N-5} {ke^{[-\ln N+2\ln\ln N]e^{-\alpha T_m}}\over ke^{[-\ln N+2\ln\ln N]e^{-\alpha T_m}}+J(T_m)}\\
&= \sum_{k=1}^{N-5} {k\over k+ e^{[2\b -2\ln\ln N]e^{-\a T_m} }} \\
&\leq \int_{0}^N {x\over x+e^{[2\b -2\ln\ln N]e^{-\a T_m} }}dx\\
&= N + e^{[2\b -2\ln\ln N]e^{-\a T_m} } \ln\left[{e^{[2\b -2\ln\ln N]e^{-\a T_m} }\over N+ e^{[2\b -2\ln\ln N]e^{-\a T_m} }}\right].
\end{align*}
Notice that
\begin{align*}
&\lim_{\b,N\uparrow+\infty} \left(N + e^{[2\b -2\ln\ln N]e^{-\a T_m} } \ln\left[{e^{[2\b -2\ln\ln N]e^{-\a T_m} }\over N+ e^{[2\b -2\ln\ln N]e^{-\a T_m} }}\right]\right)\\
&=\lim_{\b,N\uparrow+\infty}\left(N+ e^{[2\b -2\ln\ln N]e^{-\a T_m} } \ln\left[ 1-{N\over N+e^{[2\b -2\ln\ln N]e^{-\a T_m} }}\right]\right)\\
&=\lim_{\b,N\uparrow+\infty}\!\left(\!{N}\!+\!e^{[2\b -2\ln\ln N]e^{-\a T_m} }\left(\!{1\over2}\!\left({N\over N+e^{[2\b -2\ln\ln N]e^{-\a T_m} }}\right)^2\!\!\!-{N\over N+e^{[2\b -2\ln\ln N]e^{-\a T_m} }}\! \right)\!\right)\!\\
&=\lim_{\b,N\uparrow+\infty}\left({N^2\over e^{[2\b -2\ln\ln N]e^{-\a T_m}}}+{N^2  e^{[2\b -2\ln\ln N]e^{-\a T_m}}\over 2 e^{[4\b -4\ln\ln N]e^{-\a T_m} }}\right)\\
&=\lim_{\b,N\uparrow+\infty} {3\over 2}\exp\Big[2\ln N - [2\b -2\ln\ln N]e^{-\a T_m}\Big]\\
&=\lim_{\b,N\uparrow+\infty} {3\over 2}\exp\Big[2\ln N - 2\b + 2\ln\ln N + \a e^{-d\beta} (2\b -2\ln\ln N)\Big]=0,
\end{align*}
%{\small
%\[
%\lim_{\b,N\uparrow+\infty} \left(N + e^{[2\b -2\ln\ln N]e^{-\a T_m} } \ln\left[{e^{[2\b -2\ln\ln N]e^{-\a T_m} }\over N+ e^{[2\b -2\ln\ln N]e^{-\a T_m} }}\right]\right)=
%\]
%\[
%\lim_{\b,N\uparrow+\infty}\left(N+ e^{[2\b -2\ln\ln N]e^{-\a T_m} } \ln\left[ 1-{N\over N+e^{[2\b -2\ln\ln N]e^{-\a T_m} }}\right]\right)=
%\]
%\[
%\lim_{\b,N\uparrow+\infty}\!\left(\!{N}\!+\!e^{[2\b -2\ln\ln N]e^{-\a T_m} }\left(\!{1\over2}\!\left({N\over N+e^{[2\b -2\ln\ln N]e^{-\a T_m} }}\right)^2\!\!\!-{N\over N+e^{[2\b -2\ln\ln N]e^{-\a T_m} }}\! \right)\!\right)\!=
%\]
%\[
%\lim_{\b,N\uparrow+\infty}\left({N^2\over e^{[2\b -2\ln\ln N]e^{-\a T_m}}}+{N^2  e^{[2\b -2\ln\ln N]e^{-\a T_m}}\over 2 e^{[4\b -4\ln\ln N]e^{-\a T_m} }}\right)=
%\]
%\[
%\lim_{\b,N\uparrow+\infty} {3\over 2}\exp\Big[2\ln N - [2\b -2\ln\ln N]e^{-\a T_m}\Big]=
%\]
%\[
%\lim_{\b,N\uparrow+\infty} {3\over 2}\exp\Big[2\ln N - 2\b + 2\ln\ln N + \a e^{-d\beta} (2\b -2\ln\ln N)\Big]=0,
%\]}
thanks to \eqref{condbeta}.\\
All these considerations imply that the probability that an undesired spin flip occurs before the droplet of $+1$ spins invades the whole space converges to zero: 
\begin{align*}
P(\bar{\tau}\leq \tau_{N-1})&=E( P(\bar{\tau}\leq \tau_{N-1}|T_1){\bf 1}_{A_N})+P(A_N^c)\\
&\leq E\left( P(\bar{\tau}\leq \tau_1|T_1){\bf 1}_{A_N}+\sum_{k=2}^{N-1} P(\bar{\tau}\leq\tau_{k}|T_1,(\bar{\tau}>\tau_{k-1})){\bf 1}_{A_N}\right)+P(A_N^c)\\
&\leq \sum_{k=1}^{N-1} P(X_k<Y) + P(A_N^c)\\
&\leq \sum_{k=1}^{N-1} {I_k(T_m)\over I_k(T_m)+J(T_m)} +(N-1) P(Y>T_m)+P(A_N^c)\underset{\beta,N\uparrow+\infty}{\longrightarrow}0.
\end{align*}
\bigskip

{\bf Step 2: Time-scaling for the covering process}\\
Using the graphical construction we can couple the process $(\tilde{\s},\tilde{\l})$ with the process $(\hat{\s}, \hat{\l})$ obtained by suppressing all undesired spin flip; in other words $\tilde{\s}(0) = \hat{\s}(0)$, and
\[
\hat{\l}_i(t) = \left\{ \begin{array}{ll} \tilde{\l}_i(t) & \mbox{if } \hat{\s}_i(t) = -1, \mbox{ and }  \hat{\s}_j(t) = 1 \mbox{ for at least one } j \in \{i-1,i+1\} \\ 0 & \mbox{otherwise} \end{array} \right.
\]
By the estimates above, we have
\begin{equation}\label{hatapprox}
\lim_{\b,N \uparrow +\infty} P(\tilde{\s}(t) = \hat{\s}(t) \mbox{ for } t \in [0,T_c]) = 1.
\end{equation}
Thus, to compute the distribution of $T_c$, we can use the process $\hat{\s}$ in place of $\tilde{\s}$. Note that the times $\tau_n$ introduced above are well defined for the process $(\hat{\s}, \hat{\l})$. Moreover, $T_c = \tau_{N-1}$ on the event $\{\tilde{\s}(t) = \hat{\s}(t) \mbox{ for } t \in [0,T_c]\}$. Using the same estimate as in Step 1, for $n=1,\ldots,N-1$, on $A_N$ we have
\[
P \left(\tau_n - \tau_{n-1} > \frac{e^{-d\b}}{N} \big| T_1\right) \leq \exp\left[ -\frac{e^{-d\b}}{N}e^{\left[2 \b - \ln N\right]e^{-\a \frac{e^{-d\b}}{N}}}\right] 
\]
 which implies, defining
 \begin{equation}\label{BN}
 B_N:= \left\{ \tau_n - \tau_{n-1} \leq  \frac{e^{-d\b}}{N}: \, n=1,\ldots,N-1\right\},
 \end{equation}
 the estimate
\begin{align}
\label{estBN}
P(B_N^c | T_1){\bf 1}_{A_N} &\leq N \exp\left[ -\frac{e^{-d\b}}{N}e^{\left[2 \b - \ln N\right]e^{-\a \frac{e^{-d\b}}{N}}}\right]\nonumber \\
&= \exp\left[ \ln N - e^{-d\b-\ln N +[2\b -\ln N]e^{-\a\frac{e^{-d\b}}{N}}}  \right]\nonumber \\
&= \exp \left[ \ln N - e^{(2-d)\beta-2\ln N + \a \frac{e^{-d\b}}{N}[2\b-\ln N]+o\left( \b\frac{e^{-d\b}}{N}\right)  }\right]\to 0
\end{align}
as $\b,N \uparrow +\infty$ since $2c+d<2$. This estimate, together with \eqref{hatapprox}, gives also
\begin{equation}\label{boundTc}
\lim_{\b,N \uparrow +\infty} P(T_c > e^{-d\b}|T_1){\bf 1}_{A_N} = \lim_{\b,N \uparrow +\infty} P(\tau_{N-1} > e^{-d\b}|T_1){\bf 1}_{A_N}  \ra 0\,.
\end{equation}
Having all these preliminary estimates, we now aim at giving sharp estimates on the distribution of $\tau_{N-1}$. The key idea is to write
\[
\tau_{N-1} = \tau_1 + \sum_{k=2}^{N-1} \left(\tau_k - \tau_{k-1}\right),
\]
and show that the random variables in the sum above are nearly independent and identically distributed.  We  define
\[
L_N :=  - \ln N + \ln \ln N + \ln \a + X_N,
\]
so that $\l_i(T_1^{-}) = L_N + o_i(1)$, where for each $\e>0$
\[
P\left(\max_{i=1,\ldots,N} |o_i(1)|>\e \right) \ra 0
\]
as $\b,N \uparrow +\infty$, see \eqref{lambdaT1bis} . By \eqref{lambdainit}, 
\[
P(\tau_1>t | T_1) = P(\min(X_+, X_-) > t  | T_1),
\]
where $X_+, X_-$ are random variables which are independent conditionally to ${T_1}$, and 
\[
P\left(X_{\pm} > t |T_1\right) = \exp\left[ - \int_0^t \left( e^{(2\b + \l_{\overline{i} \pm 1}(T_1^-))e^{-\a s}} \right) ds \right].
\]
Therefore
\begin{equation}\label{disttau1}
\begin{split}
P(\tau_1>t |T_1) & = \exp\left[  - \int_0^t \left( e^{(2\b + \l_{\overline{i} + 1}(T_1^-))e^{-\a s}} \right) ds   - \int_0^t \left( e^{(2\b + \l_{\overline{i} - 1}(T_1^-))e^{-\a s}} \right) ds \right] \\ & = 
\exp\left[  - 2\int_0^t \left( e^{(2\b + L_N + o(1))e^{-\a s}} \right) ds  \right]
\end{split}
\end{equation}
where $o(1)$ denotes a $T_1$-measurable random variable which goes to zero in probability. More generally, using \eqref{localtaun1} and \eqref{localtaun2}, for $2 \leq n \leq N-3$,
\begin{equation}\label{disttaun1}
P(\tau_{n+1} - \tau_n>t | \tau_n, \tau_{n-1},\ldots, \tau_1, T_1) \geq \exp\left[ - 2 \int_0^t e^{\left(2\b + ( L_N + o(1))e^{-\a \tau_n} \right) e^{-\a s} } ds\right]
\end{equation}
and
\begin{equation}\label{disttaun2}
P(\tau_{n+1} - \tau_n>t | \tau_n, \tau_{n-1},\ldots, \tau_1, T_1) \leq \exp\left[ - 2 \int_0^t e^{\left(2\b e^{-\a \tau_n}  +  L_N + o(1)\right) e^{-\a s} } ds\right].
\end{equation}
The case $n = N-2$ is similar, the $2$ multiplying the $\int_0^t$ must be removed and $2\b$ replaced by $4\b$.
%As in \eqref{BN}, set
%\[
%B_n :=  \left\{ \tau_k - \tau_{k-1} \leq  \frac{e^{-\b}}{N}: \, k=1,\ldots,n-1\right\}
%\]
%for $n \leq N$. 
%Define $\xi_n := \left[\tau_n - \tau_{n-1}\right] {\bf 1}_{B_{n+1}}$. Note that
%\[
%\tau_{N-1} = \xi_1 + \xi_{N-1}
%\]
%on $B_N$. 
By \eqref{disttaun1}, on $B_{N} \cap A_N$ we have, for $n=1,\ldots,N-3$ and the above correction for $n=N-2$,
\begin{equation}\label{lowerxi}
P(\tau_{n+1} - \tau_n>t  | \tau_n, \tau_{n-1},\ldots, \tau_1, T_1) \geq  \exp\left[ - 2 t e^{2\b 
	%e^{-\a e^{-d\b}}  
	+  L_N + o(1) }\right] = P(Y_n > t|T_1)
\end{equation}
where $(Y_1,\ldots,Y_{N-2})$, conditioned to $T_1$ are independent and have exponential distribution with mean
\[
E(Y_n|T_1) = \frac12 e^{-\left[2\b 
%e^{-\a e^{-d\b}}  
+  L_N + o(1) \right]}
\]
for $n \leq N-3$, while
\[
E(Y_{N-2}|T_1) =  e^{-\left[4\b 
%e^{-\a e^{-d\b}}  
+  L_N + o(1)\right] }.
\]
Thus, by Lemma \ref{lemma:sum}, 
{
%\color{red}
which is stated and proved below,
}
the following inequality holds on $A_N$, for every $t>0$:
\begin{equation}\label{lbtauN}
P(\tau_{N-1}>t|T_1) \geq P(Y_0 + \cdots + Y_{N-2} >t | T_1) - NP(B_N^c |T_1).
\end{equation}
Since, for $j \leq N-3$
\begin{equation}\label{meanlower}
E(Y_j|T_1) = \frac{N}{2\a \ln N}e^{-2\b- X_N}(1+o(1)).
\end{equation}
the Law of Large Numbers for $(Y_n)$ gives
\[
\lim_{\b,N \uparrow +\infty} P\left(\frac{Y_0 + \cdots + Y_{N-2}}{ \frac{N^2}{\a \ln N}e^{-2\b- X_N}}>1-\e \right) =1
\]
for every $\e>0$. Inserting this in \eqref{lbtauN}, using the fact that by \eqref{estBN}
\[
NP(B_N^c |T_1){\bf 1}_{A_N} \ra 0,
\]
$P(A_N^c) \ra 0$ and $T_c = \tau_{N-1}$ on $A_N$, we obtain
\begin{equation}\label{finlbtauN}
	\forall\e>0\qquad
\lim_{\b,N \uparrow +\infty} P\left(\frac{T_c}{ \frac{N^2}{\a \ln N}e^{-2\b- X_N}}>1-\e \right) =1.
\end{equation}
{
%\color{red}
To obtain a corresponding upper bound, define $\xi_n:= \left[\tau_{n+1} - \tau_n \right] {\bf 1}_{B_N}$. 
By \eqref{disttaun2} and observing that, by definition of $B_N$, $\xi_n \leq \frac{e^{-d\b}}{N}$ almost surely , 
\[
P(\xi_n>t| \tau_n, \tau_{n-1},\ldots, \tau_1, T_1) \leq  \exp\left[ - 2 t e^{\left(2\b e^{-\a e^{-d\b}}  +  L_N + o(1)\right) e^{-\a \frac{e^{-d \b}}{N}} } \right] = P(Z_n>t|T_1),
\]
where $Z_1, \ldots, Z_{N-2}$ are, conditionally to $T_1$, independent, exponentially distributed with mean
\[
\frac12 e^{-\left(2\b e^{-\a e^{-d\b}}  +  L_N + o(1)\right) e^{-\a \frac{e^{-d\b}}{N}} } =  \frac{N}{2\a \ln N}e^{-2\b- X_N}(1+o(1)).
\]
}
Using Lemma \ref{lemma:sum} as above and observing that $\xi_1 + \cdots + \xi_{N-1} = \tau_{N-1} = T_c$ on $B_N \cap A_N$, we obtain
\begin{equation}\label{finubtauN}
	\forall\e>0\qquad
\lim_{\b,N \uparrow +\infty} P\left(\frac{T_c}{ \frac{N^2}{\a \ln N}e^{-2\b- X_N}}<1+\e \right) =1\,.
\end{equation}
Together with \eqref{finlbtauN}, this completes the proof.

In the proof of Theorem \ref{th:covering}, we showed that the sequence of times taken to perform each step in the covering can be stochastically dominated, both from above and from below, by two different families of i.i.d. random variables obeying the same Law of Large Numbers. This was sufficient to conclude the proof thanks to the following technical lemma.
\begin{lemma} \label{lemma:sum}
Let $X=(X_n)_{n=1}^N$ and $Y=(Y_n)_{n=1}^N$ be two random vectors, such that $X$ is adapted to a filtration $(\mathcal{F}_n)_{n=1}^N$, and $Y$ has independent components. Define $S_n^X:= X_1 + \cdots + X_n$, and similarly $S_n^Y$. Assume there is an event $B$ such that for every $t \in \R$ and $n \in \{1,\ldots,N\}$ and $\omega \in B$
\begin{equation}\label{lowerin}
P(X_n>t|\mathcal{F}_{n-1})(\omega) \leq P(Y_n >t).
\end{equation}
Then for every $t \in \R$ and $n \in \{1,\ldots,N\}$
\begin{equation}\label{finlowerin}
P(S_n^X > t) \leq P(S_n^Y > t)+ n P(B^c)\,.
\end{equation}
Similarly, if \eqref{lowerin} is replaced by
\begin{equation}\label{upperin}
P(X_n>t|\mathcal{F}_{n-1})(\omega) \geq P(Y_n >t),
\end{equation}
then we have
\begin{equation}\label{finupperin}
P(S_n^X > t) \geq P(S_n^Y > t)- n P(B^c)\,.
\end{equation}
\end{lemma}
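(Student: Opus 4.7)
The plan is to prove \eqref{finlowerin} by induction on $n$; the reverse bound \eqref{finupperin} follows by swapping inequalities throughout. The intuition is that an additive error of $P(B^c)$ is paid once per inductive step, directly accounting for the factor $n$ in the statement.

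For the base case $n=1$, I would write $P(X_1 > t) = E[P(X_1 > t \mid \mathcal{F}_0)]$ and split the integrand with $\mathbf{1}_B + \mathbf{1}_{B^c}$. The hypothesis $P(X_1 > t \mid \mathcal{F}_0)(\omega) \leq P(Y_1 > t)$ on $B$ bounds the first piece by $P(Y_1 > t)$, while the second is trivially at most $P(B^c)$. For the inductive step, I would condition on $\mathcal{F}_{n-1}$ and use the $\mathcal{F}_{n-1}$-measurability of $S_{n-1}^X$ to obtain
\begin{equation*}
P(S_n^X > t) \;=\; E\!\left[ P\!\left(X_n > t - S_{n-1}^X \mid \mathcal{F}_{n-1}\right) \right].
\end{equation*}
The same $\mathbf{1}_B + \mathbf{1}_{B^c}$ splitting, with the hypothesis applied $\omega$-pointwise at the random threshold $t - S_{n-1}^X(\omega)$, then yields
\begin{equation*}
P(S_n^X > t) \;\leq\; E\!\left[ \overline{F}_{Y_n}(t - S_{n-1}^X)\right] + P(B^c),
\end{equation*}
where $\overline{F}_{Y_n}(s) := P(Y_n > s)$.

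The remaining task is to transfer the inductive tail bound on $S_{n-1}^X$ into a bound on $E[\overline{F}_{Y_n}(t - S_{n-1}^X)]$. Since the map $s \mapsto \overline{F}_{Y_n}(t - s)$ is nondecreasing and takes values in $[0,1]$, the layer-cake formula gives
\begin{equation*}
E[\overline{F}_{Y_n}(t - S_{n-1}^X)] \;=\; \int_0^1 P(S_{n-1}^X > s_u)\, du,
\end{equation*}
where $s_u$ is obtained from the generalized inverse of $\overline{F}_{Y_n}(t - \cdot)$. Applying the inductive hypothesis inside the integral and then using the independence of $Y_n$ from $S_{n-1}^Y$ to identify $E[\overline{F}_{Y_n}(t - S_{n-1}^Y)] = P(S_n^Y > t)$ will give
\begin{equation*}
E[\overline{F}_{Y_n}(t - S_{n-1}^X)] \;\leq\; P(S_n^Y > t) + (n-1) P(B^c),
\end{equation*}
which combined with the extra $P(B^c)$ above closes the induction. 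The only real subtlety I anticipate is ensuring that the tail comparison survives composition with the monotone bounded function $\overline{F}_{Y_n}(t - \cdot)$ without amplifying the $(n-1) P(B^c)$ error; the layer-cake representation is the cleanest device to preserve the additive nature of the error term.
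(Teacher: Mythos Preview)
Your proposal is correct and follows essentially the same inductive route as the paper: split off $P(B^c)$ at each step via $\mathbf{1}_B+\mathbf{1}_{B^c}$, then push the inductive tail bound on $S_{n-1}^X$ through an integral to reach $P(S_n^Y>t)$. The only cosmetic difference is that the paper, after assuming without loss of generality that $Y$ is independent of $X$, writes $E[\overline{F}_{Y_n}(t-S_{n-1}^X)]=\int P(S_{n-1}^X>t-y)\,\mathcal{L}_{Y_n}(dy)$ directly by Fubini and applies the inductive hypothesis pointwise in $y$, whereas you reach the same conclusion via the layer-cake representation; the two devices are interchangeable here.
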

\begin{proof}
We prove the desired statement by induction on $n$. For $n=1$ there is nothing to prove. Note that, without loss of generality, we can assume $Y$ to be independent of $X$. 
{
%\color{red}
By assumption, for every $s,t \in \R$ and $\o \in B$
\[
P(X_{n+1} > t-s|\mathcal{F}_n)(\o) \leq P(Y_{n+1} > t-s|\mathcal{F}_n)(\o).
\]
This inequality still holds if we replace $s$ by the $\mathcal{F}_n$-measurable random variable $S_n^X$.
Thus we have, on $B$,
\[
P(S_{n+1}^X>t|\mathcal{F}_n) 
 \leq P(Y_{n+1} + S_n^X > t|\mathcal{F}_n),
\]
so that 
\[
P(S_{n+1}^X>t) \leq P(Y_{n+1} + S_n^X > t) + P(B^c).
\]
}
On the other hand, denoting by $\mathcal{L}_{Y_{n+1}}$ the law of $Y_{n+1}$, using the inductive assumption we obtain
\begin{align*}
P(Y_{n+1} + S_n^X > t) &= \int P(S_n^X > t-y) \mathcal{L}_{Y_{n+1}}(dy) \\& \leq  \int P(S_n^Y > t-y) \mathcal{L}_{Y_{n+1}}(dy) + nP(B^c) = P(S_{n+1}^Y > t)+ nP(B^c) .
\end{align*}
The proof with the reversed inequalities is identical.

\end{proof}

\subsection{Oscillating behavior: proof of Theorem \ref{thm:macroscopic_oscillations}}  \label{oscillation}

Let us start with some preliminary considerations: define the stopping times \[
\tau_{\tilde{m}}=\inf\{t>0\:|\: \tilde{m}_N(t)= +1\},\ \ \ \ 
\tau_{\tilde{x}}=\inf\{t>0\:|\: \tilde{x}(t)=+1\}.\]
For simplicity of notations and readability of the proof we only prove that the process $(\tilde{m}_N(t\wedge \tau_{\tilde{m}}))_{t\in[0,T]}$ converges to $(\tilde{x}(t\wedge \tau_{\tilde{x}}))_{t\in[0,T]}$ as $\beta,N\uparrow +\infty$: the result can be extended for any finite number of iterations (the same idea as Theorem \ref{thm:many_stopping}).
% and this is sufficient since $$P\left(\sum_{i= 1}^{+\infty} Z_i<T\right)=0.$$
Moreover, consider the process $(\hat{\underline{\sigma}}(t),\hat{\underline{\lambda}}(t))_{t\geq 0}$ defined at the beginning of Step 2 in 
the proof of Theorem \ref{th:covering}, for which all ``undesired" flips are suppressed: coupling this process with the original one $(\underline{\sigma}(t),\underline{\lambda}(t))_{t\geq 0}$ via the graphical construction implies that
\[ \lim_{\beta,N\uparrow +\infty} P(\hat{\sigma}_i(t)=\sigma_i(t) \mbox{ for } i\in\Lambda_N, \:t\in[0,T_1+T_c])=1,\]
hence we can prove the result using the total magnetization corresponding to the process $\underline{\hat{\sigma}}$ instead of $\underline{\sigma}$. Now we are ready for the proof.\\
Since the amplitude of the jumps of the process $(\tilde{m}_N(t))_{t\in[0,T]}$ converges to zero, then, 
{
%\color{red}
provided its weak limit
 $(\tilde{m}(t))_{t\in[0,T]}$ exists,
 }
 it holds $P(\tilde{m}\in\mathcal{C}([0,T],\mathbb{R})=1$ (see \cite{B99}, Theorem 13.4). This implies that the convergence can be studied on the space $\mathcal{D}([0,T],\mathbb{R})$ endowed with the uniform metric and topology (see for example Lemma 1.6.4 in \cite{S04}). With this choice, on $\mathcal{M}_{1}(\mathcal{D}([0,T],\mathbb{R})$ the \textit{Wasserstein distance} $\mathcal{W}_1$ 
%(see Definition \ref{WASSERSTEIN}) 
reads
\begin{equation}
\label{wasserstein}
\mathcal{W}_1(\mu,\nu)=\inf_{\gamma\in\Gamma(x,y)} \int_{\mathcal{D}\times\mathcal{D}} ||x-y||_{\infty}d\gamma(x,y)
\end{equation}
where $\Gamma(\mu,\nu)$ is the set of all possible couplings of $\mu$ and $\nu$.\\
Consider the events 
\[A_N=\{-\ln N \leq \l_i(T_1^-)\leq -\ln N +2\ln \ln N \: :\: i=1,\dots,N\},\]
\[B_N=\left\{ \tau_n-\tau_{n-1}>{e^{-d\b}\over N}\: : \: n=1,\dots, N-1\right\}
\]
 introduced while proving Theorem \ref{th:covering}. The strategy of the proof is to use the graphical construction to couple $(\tilde{m}_N(t))_{t\in[0,T]}$ with two processes $(\tilde{m}^+_N(t))_{t\in[0,T]}$  and $(\tilde{m}^-_N(t))_{t\in[0,T]}$,  both converging to $(\tilde{x}(t))_{t\in[0,T]}$, in such a way
 that under $A_N\cap B_N$ it holds
\[\tilde{m}^-_N(t)\leq \tilde{m}_N(t)\leq \tilde{m}_N^+(t) \:\:\:\:\: \mbox{ for any } t\in[0,T\wedge \tau_{\tilde{m}}].\]
Roughly speaking, $\tilde{m}^+_N$ (respectively $\tilde{m}^-_N$) represents the time-scaled magnetization of a spin system $\underline{\eta}_N^+$ (resp. $\underline{\eta}_N^-$) in which all undesired spins are suppressed and, after time $T_1$, each flip is performed with a higher (resp. lower) rate with respect to $\hat{\underline{\sigma}}$. Moreover, the rates for $\underline{\eta}_N^+$ and $\underline{\eta}_N^-$ have to be chosen in such a way that both $\tilde{m}^+_N$ and $\tilde{m}^-_N$ converge to $\tilde{x}$.\\
On the probability space already defined for the graphical construction, define the spin processes $\underline{\eta}^+$ and $\underline{\eta}^-$ in the following way:
\[ \eta^+_i(t)=\eta^-_i(t)=\sigma_i(t), \:\:\:\:\: t\in[0,T_1],\]
while, after $T_1$, the local fields for $\underline{\eta}^+$ and $\underline{\eta}^-$ are defined as:
{\small
\[
\lambda^+_i(t) = \left\{ \begin{array}{ll} 2\b+L_Ne^{-\a e^{-d\b}} & \mbox{if } \eta^+_i(t) = -1, \mbox{ and }  \eta^+_j(t) = 1 \mbox{ for at least one } j \in \{i-1,i+1\} \\ 0 & \mbox{otherwise} \end{array} \right.
\]
\[
\lambda^-_i(t) = \left\{ \begin{array}{ll} 2\b e^{-\a e^{-d\b}}+L_N & \mbox{if } \eta^-_i(t) = -1, \mbox{ and }  \eta^-_j(t) = 1 \mbox{ for at least one } j \in \{i-1,i+1\} \\ 0 & \mbox{otherwise} \end{array} \right.
\]}
where, as in the proof of Theorem \ref{th:covering},
\[ L_N=\lambda_i(T_1^-)=-\ln N+\ln\ln N+\ln \alpha + X_N+o(1).\]
Of course, by construction, after time $T_1$ a random point $\tau$ is accepted for $\underline{\eta}^+$ (respectively $\underline{\eta}^-$) if and only if  \[
{\exp[-\eta^+_i(\tau)\lambda_i^+(\tau)]\over e^{4\b}}>U_\tau, \:\:\left(\mbox{respectively } {\exp[-\eta^-_i(\tau)\lambda_i^-(\tau)]\over e^{4\b}}>U_\tau \right)
\]
where $U_{\tau}$ is a uniform random variable associated with $\tau$. \\
Notice that, under the event $A_N\cap B_N$ (see again the proof of Theorem \ref{th:covering}), it holds that
\begin{equation}
\label{localfield_bounds} {\exp[-\eta^-_i(t)\lambda_i^-(t)]}\leq {\exp[-\sigma_i(t)\lambda_i(t)]} \leq {\exp[-\eta^+_i(t)\lambda_i^+(t)]}
\end{equation}
for any $t$ up to $T_1+T_c$, which means that any point which is accepted for a spin flip for $\underline{\eta}^-$ is also accepted for $\underline{\hat{\sigma}}$, and any point which is accepted for a spin flip for $\underline{\hat{\sigma}}$ is also accepted for $\underline{\eta}^+$, therefore, since we constructed a \textit{monotone coupling} (see \cite{L13}), it holds
\begin{equation}
\label{spins_bounds} \eta_i^-(t)\leq \hat{\sigma}_i(t)\leq \eta_i^+(t), \:\:\:\:i\in\Lambda_N,
\end{equation}
for any $t$ up to $T_1+T_c$.
Actually, \eqref{localfield_bounds} and \eqref{spins_bounds} are true only up to the second to last flip of $\underline{\hat{\s}}$, since its last flip occurs with rate of order $e^{4\b+L_N}$: anyway this is not so important. In fact, if we denote by $m_N^+$ (respectively $m^-_N$) the total magnetization associated with $\underline{\eta}^+$ (respectively $\underline{\eta}^-$), our goal is to give bounds on $m_N$ by means of $m_N^+$ and $m_N^-$: it is true that, up to the second to last flip of $\underline{\hat{\s}}$, by \eqref{spins_bounds} it holds that
\[m_N^-(t)\leq m_N(t)\leq m_N^+(t).\]
To extend the bounds on the whole interval $[0,T_1+T_c]$ it is sufficient to add a term $+{2\over N}$ to the upper bound.
These bounds are true also passing to the time-scaled processes $\tilde{m}_N^{\pm}(t)={m}_N^{\pm}(\theta_N(t))$: to sum up, with this construction, under the event $A_N\cap B_N$, it holds
\begin{equation*}
 \tilde{m}_N^-(t)\leq \tilde{m}_N(t)\leq \tilde{m}_N^+(t)+{2\over N} \: \mbox{ for any }\: t\in[0,\tau_{\tilde{m}}]
\end{equation*}
which implies, again under $A_N\cap B_N$,
\begin{equation}
\label{magnetizations_bounds} \max\{||\tilde{m}_N-\tilde{m}_N^-||_{\infty}, ||\tilde{m}_N-(\tilde{m}_N^++{2\over N})||_{\infty}\}\leq ||\tilde{m}_N^+-\tilde{m}_N^-||_{\infty}+{2\over N},
\end{equation}
where $||\cdot||_\infty$ denotes the uniform norm on $\mathcal{D}([0,T\wedge \tau_{\tilde{m}}],\R)$.
Notice that by the graphical construction and the definition of $\tilde{m}_N^+$ and $\tilde{m}_N^-$, one gets that \[||\tilde{m}_N^+-\tilde{m}_N^-||_{\infty}\to 0 \:\:\mbox{ in probability as } \: \b,N\uparrow+\infty.\] 
 Hence, since $P(A_N\cap B_N)\to 1$, thanks to \eqref{magnetizations_bounds} we also obtain that
\begin{equation}
\label{ch_three_convinprobability}  ||\tilde{m}_N-\tilde{m}_N^-||_{\infty}\to 0 \:\:\mbox{ in $L^1$ as } \: \b,N\uparrow+\infty,
\end{equation}
where the convergence in $L^1$ follows by convergence in probability and uniform integrability, the latter due to the fact that $||\tilde{m}_N-\tilde{m}_N^-||_{\infty}\leq 2$ for any $N$.\\
Observe that $\{\tilde{m}_N^-\}_N$ (but also $\{\tilde{m}_N^+\}_N$), stopped as soon as it reaches $+1$, converges to the process $\tilde{x}(t\wedge \tau_{\tilde{x}})$. Indeed from the definition of time scaling \eqref{timescaling}, after time $T_1$,
 $\tilde{m}^{\pm}_N$ essentially become Poisson processes rescaled by ${2\over N}$ with random intensity $Ne^{-X_N}$. So the deterministic limit follows from standard scaling arguments. \\
Now denote with $\mu_N$ the law of $\tilde{m}_N$ on $\mathcal{D}([0,T],\mathbb{R})$. Let also $\mu_x$ be the law of the limiting process $\tilde{x}$. To show the weak convergence of $\tilde{m}_N$ to $\tilde{x}$ it is enough to show that $\mathcal{W}_1(\mu_N,\mu_x)$ converges to 0 (see \cite{Vil08}). Let $\mu_N^-$ be the law of the process $\tilde{m}_N^-$. Since $\mu_N$ and $\mu_N^-$ can be coupled via the graphical construction of $\tilde{m}_N$ and $\tilde{m}_N^-$, by \eqref{ch_three_convinprobability} and the definition of the Wasserstein distance, it holds
\[ \mathcal{W}_1(\mu_N,\mu_N^-)\leq E\left[ ||\tilde{m}_N-\tilde{m}_N^-||_{\infty}\right] \to 0 \:\: \mbox{ as } \: \b,N\uparrow+\infty.\]
Therefore, by the fact that $\tilde{m}_N^-$ weakly converges to $\tilde{x}$,
\[\mathcal{W}_1(\mu_N,\mu_x) \leq \mathcal{W}_1(\mu_N,\mu_N^-)+\mathcal{W}_1(\mu_N^-,\mu_x)\to 0\:\: \mbox{ as } \: \b,N\uparrow+\infty ,  \]
which proves the weak convergence of $\tilde{m}_N$ to $\tilde{x}$.

\subsection{Smoothly varying initial condition: Proof of Theorem \ref{th:varying}}

{\em Proof of part (i).} The proof begins as that of Theorem \ref{th:firstspinflip}. Formulas \eqref{dtf}-\eqref{coex} hold unchanged, as constance in the initial condition is not used. Formula \eqref{T1est} is now replaced by (where we write $T_1$ for $T_{1,1}$)
\begin{equation}
\label{ztf}
%\forall t\geq 0\qquad
P(T_1>t)\,=\,
	%P\big(\big|Y_{N,\beta}\big|>\ln\beta\big)\,+\,\cr
	\exp\Bigg(-
S_{N,\beta}(t)\,
	%{\displaystyle \exp\Big( Y_{N,\beta} e^{-\alpha t } \Big)}
		 \left(1+
	 o({1}) + O\left( \frac{e^{\a t}}{\b} \right)
	 \right)
	%\Big(1+ O\Big( \frac{ e^{\alpha t } }{\beta } \Big) \Big)
	%\Big(1+O\Big(\frac{ e^{\alpha t } }{\beta } \Big)\Big)
	 +
	O\Big(
	\frac{N e^{-\beta } }{\beta\alpha }\Big)
	 \Bigg)\, ,
\end{equation}
where
\begin{equation} \label{SNbeta}
S_{N,\beta}(t)\,=\,
	\sum_{1\leq i\leq N}
	\frac{\displaystyle\exp\Big( 
	%-4\beta e^{-\alpha t } 
	\lambda_{N,\beta}
	\Phi\Big(\frac{i}{N}\Big)
	 e^{-\alpha t } \Big)}
	{\Big(\displaystyle
	-\Phi\Big(\frac{i}{N}\Big)
	\Big)
	 \alpha 
\lambda_{N,\beta}
	 e^{-\alpha t } 
	}
	 \,.
\end{equation}
To alleviate the notation, 
we set
$$\gamma\,=\,
-\lambda_{N,\beta} 
e^{-\alpha t }\,,$$
so that
$S_{N,\beta}(t)$ can be rewritten as
$$S_{N,\gamma}(t)\,=\,
	\sum_{1\leq i\leq N}
	\frac{\displaystyle\exp\Big(
-\gamma\Phi\Big(\frac{i}{N}\Big)
	 \Big)}
	{\displaystyle
\gamma	 \alpha\Phi\Big(\frac{i}{N}\Big)
	}
	 \,.$$
 The technique is standard, indeed this expression looks like a Riemann sum, and if it were a genuine integral,
 we would apply directly Laplace's method of expansion. So we adapt here the technique to the discrete sum.
\begin{proposition}
	\label{expos}
In the regime where $N,\gamma$ tend to $+\infty$, we have
$$S_{N,\gamma}(t)\,\sim\,
	\frac{Ne^{
	-\gamma \Phi(x_*)
	}}
	{\displaystyle \gamma^{3/2}	 \alpha \Phi(x_*) }
	\sqrt{\frac{2\pi}{ 
	{\Phi''(x_*) }}}
	%\frac{1}{\alpha\Phi(x^*)}
	%\sqrt{\frac{2\pi}{-\gamma\Phi''(x^*)}}\exp\big({-\gamma\Phi(x^*)}\big)
	\,.$$
\end{proposition}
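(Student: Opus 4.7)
The plan is to recognize $S_{N,\gamma}(t)$ as a Riemann sum of amplitude $N$ attached to the integral $I_\gamma := \int_0^1 g_\gamma(x)\,dx$, where $g_\gamma(x) = \dfrac{e^{-\gamma \Phi(x)}}{\gamma \alpha \Phi(x)}$, and to extract the asymptotics via Laplace's method at the interior minimum $x_*$ of $\Phi$, while showing that the discretization does not destroy the leading behaviour.

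First I would estimate the continuous integral $I_\gamma$. Since $\Phi$ is $\mathcal{C}^2$ with a unique interior minimum at $x_* \in (0,1)$ and $\Phi''(x_*)>0$, the Taylor expansion $\Phi(x) = \Phi(x_*) + \tfrac12 \Phi''(x_*)(x-x_*)^2 + o((x-x_*)^2)$ combined with the standard Gaussian contraction around $x_*$ gives
\[
I_\gamma \;\sim\; \frac{e^{-\gamma \Phi(x_*)}}{\gamma\,\alpha\,\Phi(x_*)}\sqrt{\frac{2\pi}{\gamma\,\Phi''(x_*)}}
\]
as $\gamma \to +\infty$. The contributions away from any fixed neighbourhood of $x_*$ are exponentially suppressed by $e^{-\gamma(\Phi(x)-\Phi(x_*))}$ and are therefore negligible. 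Multiplying by $N$ reproduces the claimed leading term.

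Next I would control the discretization error $S_{N,\gamma}(t) - N I_\gamma = \sum_{i=1}^N \int_{(i-1)/N}^{i/N} \bigl[g_\gamma(i/N) - g_\gamma(x)\bigr]\,dx$. In the regime where the present proposition is applied, $\gamma$ is of order $\ln N$, so $\sqrt{\gamma} = o(N)$: the peak of $g_\gamma$, of width $\sim 1/\sqrt{\gamma}$, is finely resolved by the grid of step $1/N$. Split the index range into (a) indices $i$ with $|i/N - x_*| \le \gamma^{-1/2+\eta}$ for some small $\eta>0$, and (b) the complementary set. On region (b), both the sum and the corresponding portion of the integral are dominated by $e^{-\gamma \Phi(x_*) - c\,\gamma^{2\eta}}$, and are therefore of strictly smaller order than $I_\gamma$. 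On region (a), the Taylor expansion of $\Phi$ is accurate, and a midpoint/trapezoidal error estimate per cell is $O(N^{-2} \|g_\gamma''\|_{L^\infty(\text{cell})})$.

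The main technical obstacle is the bookkeeping for region (a). Differentiating the exponential twice, $g_\gamma''$ has dominant size $\gamma^2 (\Phi'(x))^2 g_\gamma(x)$, which at the peak is of order $\gamma^2 \Phi''(x_*)\,(x-x_*)^2 g_\gamma(x_*) + \gamma\,\Phi''(x_*)\,g_\gamma(x_*)$. Integrated against a window of width $\gamma^{-1/2+\eta}$, the aggregate discretization error is $O\bigl(N^{-1}\gamma\,g_\gamma(x_*)\cdot \gamma^{-1/2+\eta}\bigr)$, which must be compared with $I_\gamma \sim g_\gamma(x_*)/\sqrt{\gamma}$. The ratio is $O(N^{-1}\gamma^{1+\eta})$, which is $o(1)$ since $\gamma = O(\ln N)$, so the Riemann sum converges to the integral with relative error $o(1)$. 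Boundary effects at $0$ and $1$ are harmless because $\Phi(0),\Phi(1)>\Phi(x_*)$. Assembling these estimates yields the proposition.
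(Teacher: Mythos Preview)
Your approach is sound and reaches the same conclusion, but it is organised differently from the paper's argument. The paper does not separate the problem into ``continuous Laplace integral $+$ discretisation error''. Instead it performs Laplace's method \emph{directly on the sum}: after the quadratic upper/lower bounds on $\Phi$ near $x_*$, it sandwiches the resulting Gaussian sum by Gaussian integrals using monotonicity (each term $e^{-\Delta(i/N-x_*)^2}$ is bounded above/below by the integral of the same function over the adjacent cell). This yields two-sided bounds of the form $N\sqrt{\pi/\Delta}\pm O(1)$ with no derivative estimates and no appeal to the specific size of~$\gamma$; one only needs $\sqrt{\gamma}/N\to 0$ so that the $O(1)$ correction is negligible next to $N/\sqrt{\gamma}$.

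Your route works too, but it is heavier and less general as written. Two minor points: (i) the sum $S_{N,\gamma}$ is a right-endpoint rule, not a midpoint/trapezoidal rule, so the natural per-cell error involves $\|g'_\gamma\|$ rather than $\|g''_\gamma\|$ --- this only helps you, but the statement should be corrected; (ii) you explicitly invoke $\gamma=O(\ln N)$ to kill the ratio $N^{-1}\gamma^{1+\eta}$, whereas the proposition is stated for arbitrary $N,\gamma\to\infty$. A cleaner discretisation bound, closer in spirit to the paper, is to note that $g_\gamma$ is essentially unimodal, so the Riemann-sum error is at most $\frac{2}{N}\max g_\gamma=\frac{2}{N}g_\gamma(x_*)$, giving a relative error $O(\sqrt{\gamma}/N)$ without any derivative computation. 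With that replacement your argument and the paper's become essentially the same.
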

\begin{proof}
We employ the classical strategy devised by Laplace.
We expand the function $\Phi$ around its minimum and we approximate it from above by
a quadratic form.
Let $\varepsilon>0$. There exists
$\delta>0$ such that
\begin{multline} 
	\label{ino1}
	\forall x\in\,]x_*-\delta,x_*+\delta[\qquad\cr
	\Phi(x_*)+\frac{1-\varepsilon}{2}\Phi''(x_*)(x-x_*)^2\,\leq\,
	\Phi(x)\,\leq\,\Phi(x_*)+\frac{1+\varepsilon}{2}\Phi''(x_*)(x-x_*)^2\,.
\end{multline}
Since $x_*$ is the unique global minimum of $\Phi$, there exists $\eta>0$ such that
\begin{equation} 
	\label{ino2}
\forall x\in\,[0,1]\setminus ]x_*-\delta,x_*+\delta[\qquad
\Phi(x)\,\geq\,\Phi(x_*)+\eta\,.
\end{equation} 
With the help of the inequalities~\eqref{ino1} and~\eqref{ino2}, 
we split the sum and we get
\begin{multline}
	\label{fggh}
S_{N,\gamma}(t)\,=\,
	\sum_{i:|\frac{i}{N}-x_*|<\delta}
	\frac{\displaystyle\exp\Big(
-\gamma\Phi\Big(\frac{i}{N}\Big) \Big)}
	{\displaystyle 
	\gamma	 \alpha \Phi\Big(\frac{i}{N}\Big) 
	}
	\,+\,
	\sum_{i:|\frac{i}{N}-x_*|\geq\delta}
	\frac{\displaystyle\exp\Big(
-\gamma\Phi\Big(\frac{i}{N}\Big) \Big)}
	{\displaystyle 
	\gamma	 \alpha\Phi\Big(\frac{i}{N}\Big) 
	 }
	\cr
	\,\leq\,
	\sum_{i:|\frac{i}{N}-x_*|<\delta}
	\frac{\displaystyle\exp\Big(
	%{\exp\Big(
	-\gamma \Phi(x_*)-\gamma \frac{1-\varepsilon}{2}\Phi''(x_*)\big(\frac{i}{N}-x_*\big)^2
	\Big)}
	{\displaystyle \gamma	 \alpha 
	  \Phi(x_*) }
	%{\displaystyle \gamma	 \alpha \Phi\Big(\frac{i}{N}\Big) }
	\cr
	\,+\,
	\sum_{i:|\frac{i}{N}-x_*|\geq\delta}
	%\frac{\displaystyle\exp\Big( -\gamma\big(\Phi(x^*)+\eta\big)}
	\frac{\displaystyle e^{- \gamma(\Phi(x_*)+\eta)}}
	{\displaystyle \gamma	 \alpha 
	\Phi(x_*) }
	%{\displaystyle \gamma	 \alpha \Phi\Big(\frac{i}{N}\Big) }
	 \,.
\end{multline}
Let us focus on the first sum. Setting
$$\Delta\,=\,\gamma
	\frac{1-\varepsilon}{2}\Phi''(x_*)\,,$$
	we estimate the sum as follows:
\begin{multline*}
	\sum_{i:|\frac{i}{N}-x_*|<\delta}
	%\frac{\displaystyle\exp\Big(
	%{\exp\Big( \Delta\big(\frac{i}{N}-x^*\big)^2
	e^{- \Delta(\frac{i}{N}-x_*)^2}
	\,=\,
	\kern-11pt
	\sum_{i:x_*-\delta<\frac{i}{N}< x_*-\frac{1}{N} }
	\kern-11pt
	e^{- \Delta(\frac{i}{N}-x_*)^2}
	\kern-3pt
	+2+
	\kern-11pt
	\sum_{i:x_*+\frac{1}{N}\leq\frac{i}{N}<x_*+\delta}
	\kern-11pt
	e^{- \Delta(\frac{i}{N}-x_*)^2}
	\cr
	\,\leq\,
	\kern-11pt
	\sum_{i:x_*-\delta<\frac{i}{N}< x_*-\frac{1}{N} }
	\kern-11pt
	N\int_{\frac{i}{N}}^{\frac{i+1}{N}}
	e^{- \Delta(x-x_*)^2}\,dx
	\kern-0pt
	+2+
	\kern-11pt
	\sum_{i:x_*+\frac{1}{N}\leq\frac{i}{N}<x_*+\delta}
	\kern-11pt
	N\int_{\frac{i-1}{N}}^{\frac{i}{N}}
	e^{- \Delta(x-x_*)^2}\,dx
	\cr
	\,\leq\,
	N\int_{x_*-\delta}^{x_*+\delta}
	\kern-11pt
	e^{- \Delta(x-x_*)^2}\,dx+2
	\,\leq\,
	N\int_{-\infty}^{+\infty}
	\kern-11pt
	e^{- \Delta(x-x_*)^2}\,dx+2
	\,=\,
	N\sqrt{\frac{\pi}{{\Delta}}}+2\,.
	%\cdots
	%{\displaystyle \gamma	 \alpha \Phi\Big(\frac{i}{N}\Big) }
\end{multline*}
Reporting this inequality in~\eqref{fggh}, we obtain
\begin{equation*}
	%\label{fgg}
S_{N,\gamma}(t)\,\leq\,
	\frac{N e^{-
	\gamma \Phi(x_*)
	}}
	%{\displaystyle \gamma	 \alpha \big(\Phi(x^*)-\varepsilon\big) }
	{\displaystyle \gamma	 \alpha 
	 \Phi(x^*) }
	\Big(\sqrt{\frac{2\pi}{ {\gamma(1-\varepsilon)\Phi''(x^*) }}}+\frac{2}{N}+e^{-\gamma\eta}\Big)
	%{\displaystyle \gamma	 \alpha \phi\Big(\frac{i}{N}\Big) }
	%\,+\,
	%
	%\frac{\displaystyle\exp\Big( -\gamma\big(\phi(x^*)+\eta\big)}
	%frac{\displaystyle e^{ \gamma(\phi(x^*)+\eta)}}
	%\displaystyle \gamma	 \alpha 
	  %big(4/c-\phi(x^*)\big) }
	\,.
\end{equation*}
The last two terms in the parenthesis are negligible compared to the first, so that, for $N,\gamma$ large enough, we have
\begin{equation}
	\label{afgg}
S_{N,\gamma}(t)\,\leq\,
	\frac{Ne^{ -\gamma \Phi(x_*) }}
	{\displaystyle \gamma^{3/2}	 \alpha 
	 \Phi(x_*) }
	\sqrt{\frac{2\pi}{ { \Phi''(x_*) }}}
	\frac{1+\varepsilon}{1-\varepsilon}
	\,.
\end{equation}
We seek next 
a similar inequality in the opposite direction.
By
inequality~\eqref{ino1}, we have
\begin{multline}
	\label{fgggh}
S_{N,\gamma}(t)\,\geq\,
	\sum_{i:|\frac{i}{N}-x_*|<\delta}
	\frac{\displaystyle\exp\Big(
-\gamma\Phi\Big(\frac{i}{N}\Big) \Big)}
	{\displaystyle 
	\gamma	 \alpha \Phi\Big(\frac{i}{N} \Big)
	  }
	\cr
	\,\geq\,
	\sum_{i:|\frac{i}{N}-x_*|<\delta}
	\frac{\displaystyle\exp\Big(
	%{\exp\Big(
	-\gamma \Phi(x_*)-\gamma \frac{1+\varepsilon}{2}\Phi''(x_*)\big(\frac{i}{N}-x_*\big)^2
	\Big)}
	{\displaystyle 
	\gamma	 \alpha \Big(\Phi(x_*)+\frac{1+\varepsilon}{2}\Phi''(x_*)\delta^2\Big)
	}
	%{\displaystyle \gamma	 \alpha \Phi\Big(\frac{i}{N}\Big) }
	 \,.
\end{multline}
Setting
$$\Delta\,=\,\gamma
	\frac{1+\varepsilon}{2}\Phi''(x_*)\,,$$
	we estimate the sum as follows:
\begin{multline*}
	\sum_{i:|\frac{i}{N}-x_*|<\delta}
	%\frac{\displaystyle\exp\Big(
	%{\exp\Big( \Delta\big(\frac{i}{N}-x^*\big)^2
	e^{- \Delta(\frac{i}{N}-x_*)^2}
	\,\geq\,
	\kern-11pt
	\sum_{i:x_*-\delta<\frac{i}{N}< x_*-\frac{1}{N} }
	\kern-11pt
	e^{- \Delta(\frac{i}{N}-x_*)^2}
	+
	\kern-11pt
	\sum_{i:x_*+\frac{1}{N}\leq\frac{i}{N}<x_*+\delta}
	\kern-11pt
	e^{- \Delta(\frac{i}{N}-x_*)^2}
	\cr
	\,\geq\,
	\kern-11pt
	\sum_{i:x_*-\delta<\frac{i}{N}< x_*-\frac{1}{N} }
	\kern-11pt
	N\int_{\frac{i-1}{N}}^{\frac{i}{N}}
	e^{- \Delta(x-x_*)^2}\,dx
	+
	\kern-11pt
	\sum_{i:x_*+\frac{1}{N}\leq\frac{i}{N}<x_*+\delta}
	\kern-11pt
	N\int_{\frac{i}{N}}^{\frac{i+1}{N}}
	e^{- \Delta(x-x_*)^2}\,dx
	\cr
	\,\geq\,
	N\int_{x_*-\delta}^{x_*+\delta}
	e^{- \Delta(x-x_*)^2}\,dx-
	N\int_{x_*-2/N}^{x_*+2/N}
	e^{- \Delta(x-x_*)^2}\,dx
	\cr
	\,\geq\,
	\sqrt{\frac{N}{{\Delta}}}
	\int_{-\sqrt{\Delta}\delta}^{\sqrt{\Delta}\delta}
	e^{- x^2}\,dx-4
	\,\geq\,
	N\sqrt{\frac{\pi}{{\Delta}}}(1-\varepsilon)\,,
	%\cdots
	%{\displaystyle \gamma	 \alpha \Phi\Big(\frac{i}{N}\Big) }
\end{multline*}
where the last inequality holds 
for $N,\gamma$ large enough. Plugging this inequality into~\eqref{fgggh}, we get
\begin{equation}
	\label{bfgg}
S_{N,\gamma}(t)\,\geq\,
	\frac{-Ne^{ \gamma \Phi(x_*) }}
	{\displaystyle \gamma^{3/2}	 \alpha 
	\Big(\Phi(x_*)+\frac{1+\varepsilon}{2}\Phi''(x_*)\delta^2\Big)
	}
	\sqrt{\frac{2\pi}{ { \Phi''(x_*) }}}
	\frac{1-\varepsilon}{1+\varepsilon}
	\,.
\end{equation}
Inequalities~\eqref{afgg} and~\eqref{bfgg} yield
the asymptotic expansion stated in the proposition.
\end{proof}
Inserting the estimate obtained for $S_{N,\g}$ in \eqref{ztf} we obtain
\begin{equation} \label{ztf1}
P(T_1>t)\,=\,
	%P\big(\big|Y_{N,\beta}\big|>\ln\beta\big)\,+\,\cr
	\exp\Bigg(-
\frac{Ne^{
	\lambda_{N,\beta} 
e^{-\alpha t } \Phi(x_*)
	}}
	{\displaystyle \left(-\lambda_{N,\beta} 
e^{-\alpha t }\right)^{3/2}	 \alpha \Phi(x_*) }
	\sqrt{\frac{2\pi}{ 
	{\Phi''(x_*) }}}
\,
	%{\displaystyle \exp\Big( Y_{N,\beta} e^{-\alpha t } \Big)}
		 \left(1+
	 o({1}) + O\left( \frac{e^{\a t}}{\b} \right)
	 \right)
	%\Big(1+ O\Big( \frac{ e^{\alpha t } }{\beta } \Big) \Big)
	%\Big(1+O\Big(\frac{ e^{\alpha t } }{\beta } \Big)\Big)
	 +
	O\Big(
	\frac{N e^{-\beta } }{\beta\alpha }\Big)
	 \Bigg)\, .
\end{equation}
At this point we proceed as in the proof of Theorem \ref{th:firstspinflip}. Set
\[
t :=  \frac{1}{\a} \ln \left( - \frac{\l_{N,\b}}{\ln N} \right) 
+ \frac{3}{2\a} \frac{\ln \ln N}{\ln N} 
+ \frac{1}{\a \ln N} \ln \left( \a \sqrt{\frac{\Phi''(x_*)}{2 \pi}}\right) + \frac{v}{\a \ln N}  .
\]
Inserting in \eqref{ztf1} and recalling that $\Phi(x_*) = 1$ we get
\[
P(T_1>t)  = \exp\Big(-
	%{\displaystyle \exp\Big( Y_{N,\beta} e^{-\alpha t } \Big)}
	\displaystyle 
	\frac{1}{ \alpha }
	%\sqrt{\frac{2\pi\Phi(x^*)}{ { \Phi''(x^*) }}}
	\exp\big(v+
	%Y_{N,\beta} \Big(\frac{\ln N}{ -\lambda_{N,\beta} }\Big)
		  o({1}) \big)
	 \Big)\,,
\]
which completes the proof of part (i).
\bigskip

\noindent
{\em Proof of part (ii)}. This is identical to the proof of Theorem \ref{th:covering}. Some care is only needed when proving that undesired flips occur with small probability. For the proof given in Section 3.2 (Step 1) to go through with no changes one uses the assumption $\Phi(x) \in [1,2]$ for every $x$.
\bigskip

\noindent
{\em Proof of part (iii)}. Given the results in part (i), up to a global change of sign we need to find the asymptotic distribution of the first spin flip time $T_1$ starting with $\s_i(0) = -1$ and 
\begin{equation} \label{lambdainit1}
\l_i(0) = -4\b + \l_{N,\b} \Phi\left(\frac{i}{N}\right)  + o(i,N,\b)
\end{equation}
with
\[
\l_{N,\b} = \ln N - \frac32 \ln \ln N + O(1).
\]
The identity \eqref{ztf} becomes
\begin{equation}
\label{ztf2}
%\forall t\geq 0\qquad
P(T_1>t)\,=\,
	%P\big(\big|Y_{N,\beta}\big|>\ln\beta\big)\,+\,\cr
	\exp\Bigg(-
S_{N,\beta}(t)\,e^{-4\b e^{-\a t}}
	%{\displaystyle \exp\Big( Y_{N,\beta} e^{-\alpha t } \Big)}
		 \left(1+
	 o({1}) + O\left( \frac{e^{\a t}}{\b} \right)
	 \right)
	%\Big(1+ O\Big( \frac{ e^{\alpha t } }{\beta } \Big) \Big)
	%\Big(1+O\Big(\frac{ e^{\alpha t } }{\beta } \Big)\Big)
	 +
	O\Big(
	\frac{N e^{-\beta } }{\beta\alpha }\Big)
	 \Bigg)\, ,
\end{equation}
and now
\begin{equation} \label{SNbetabis}
S_{N,\beta}(t)\,=\,
	\sum_{1\leq i\leq N}
	\frac{\displaystyle\exp\Big( 
	%-4\beta e^{-\alpha t } 
	\lambda_{N,\beta}
	\Phi\Big(\frac{i}{N}\Big)
	 e^{-\alpha t } \Big)}
	{\Big(\displaystyle
	\frac{4}{c}-\Phi\Big(\frac{i}{N}\Big)
	\Big)
	 \alpha 
\lambda_{N,\beta}
	 e^{-\alpha t } 
	}
	 \,,
\end{equation}
where we have used the fact that 
\[
4 \b - \l_{N,\b} \Phi\left(\frac{i}{N}\right) \sim \l_{N,\b}\left[\frac{4}{c} - \Phi\Big(\frac{i}{N}\Big)\right].
\]
The asymptotics of $S_{N,\b}$ are obtained as in the proof of part (i), and we obtain
\[
S_{N,\b} = \frac{Ne^{
	\lambda_{N,\beta} e^{-\alpha t }
	 \Phi(x^*)
	}}
	{\displaystyle 
	(\lambda_{N,\beta} e^{-\alpha t })^{3/2}
	\alpha \big(4/c-\Phi(x^*)\big) }
	\sqrt{\frac{2\pi}{ 
	{- \Phi''(x^*) }}}\big(1+o(1)\big)
	\]
giving
\[
P(T_1>t)\,=\,
	%P\big(\big|Y_{N,\beta}\big|>\ln\beta\big)\,+\,\cr
	\exp\Bigg(-
	\frac{Ne^{
	\lambda_{N,\beta} e^{-\alpha t }
	 \Phi(x^*)
	}}
	{\displaystyle 
	(\lambda_{N,\beta} e^{-\alpha t })^{3/2}
	\alpha \big(4/c-\Phi(x^*)\big) }
	\sqrt{\frac{2\pi}{ 
	{- \Phi''(x^*) }}}
	%{\displaystyle \exp\Big( Y_{N,\beta} e^{-\alpha t } \Big)}
	{\displaystyle e^{
	-4\beta 
		 e^{-\alpha t } }}
		 \big(1+
	 o_P({1})\big)
	 \Bigg)\,.
\]
Choosing
\[
t := \frac{1}{\alpha}\ln \Big(\frac{4}{c}-\Phi(x^*)\Big) 
+ \frac{3}{2\a} \frac{\ln \ln N}{\ln N} 
+ \frac{1}{\a \ln N} \ln \left( \a \sqrt{\frac{-\Phi''(x^*)}{2 \pi}}\right) + \frac{v}{\a \ln N}  .
\]
we obtain
\[
P(T_1>t)  = \exp\Big(-
	%{\displaystyle \exp\Big( Y_{N,\beta} e^{-\alpha t } \Big)}
	\displaystyle 
	\frac{1}{ \alpha }
	%\sqrt{\frac{2\pi\phi(x^*)}{ { \phi''(x^*) }}}
	\exp\big(v+
	%Y_{N,\beta} \Big(\frac{\ln N}{ -\lambda_{N,\beta} }\Big)
		  o({1}) \big)
	 \Big)\,.
\]
It follows that 
\[
T_1 = \frac{1}{\alpha}\ln \Big(\frac{4}{c}-\Phi(x^*)\Big) 
+ \frac{3}{2\a} \frac{\ln \ln N}{\ln N} 
+ \frac{1}{\a \ln N} \ln \left( \a \sqrt{\frac{-\Phi''(x*)}{2 \pi}}\right) + \frac{X_N}{\ln N}
\]
where $X_N$ converges in distribution to a random variable $X$ whose distribution is given by
\[
P(X>x) = \exp\left(- e^x \right).
\]
Therefore
\begin{multline}
	\label{gnh}
\lambda_i(T_1^-)\,=\,
	%\lambda_{N,\beta}\,
	\lambda_i(0)\,
 e^{-\alpha T_1}\cr
\,=\,
 %e^{-\alpha T_1}
	\frac{
	\Big(-4\beta +\lambda_{N,\beta}
	\Phi\Big(\frac{i}{N}\Big)+ o(1)\Big)}{\displaystyle\frac{4}{c}-\Phi(x^*)} \left(
	1-
	\frac{3}{2\alpha}\frac{\ln\ln N}{\ln N}
	+ \frac{1}{\a \ln N} \ln \left( \a \sqrt{\frac{-\Phi''(x^*)}{2 \pi}}\right) + \frac{X_N}{\ln N} + o\left(\frac{1}{\ln N}\right)
	\right)\cr
\,=\,
	\frac{\displaystyle-\frac{4}{c}+ \Phi\Big(\frac{i}{N}\Big)}{\displaystyle\frac{4}{c}-\Phi(x^*)} 
	\Big(
	\ln N-
	\frac{3}{2}{\ln\ln N}
	+X_N
	\Big)
	+ o(1)
	\,.
\end{multline}
After the droplet expansion
\[
\lambda_i(T_1+T_c)\,=\, \l_i(T_1^-) + 4 \b = 4 \b  -\frac{\displaystyle \frac{4}{c}- \Phi\Big(\frac{i}{N}\Big)}{\displaystyle\frac{4}{c}-\Phi(x^*)} 
	\Big(
	\ln N-
	\frac{3}{2}{\ln\ln N}
	+X_N
	\Big)
	+ o(1)
\]
which is of the same form as \eqref{lambdainit1} with $\Phi$ replaced by $R\Phi$. The proof of part (iii) thus follows by iterating this argument.

\bibliographystyle{abbrv}

\end{document}